\newtheorem{theorem}{Theorem}[section]
\newtheorem{lemma}[theorem]{Lemma}
\newtheorem{dfn}[theorem]{Definition}
\numberwithin{equation}{section}
\newcommand{\vp}{\varphi}
\renewcommand{\Im}{\mathop{\rm Im}}
\renewcommand{\Re}{\mathop{\rm Re}}
\newcommand{\clos}{\mathop{\rm clos}}
\newcommand{\inte}{\mathop{\rm int}}
\newcommand{\beq}{\begin{equation}}
\newcommand{\eeq}{\end{equation}}
\newcommand{\bea}{\begin{eqnarray}}
\newcommand{\eea}{\end{eqnarray}}
\newcommand{\bal}{\begin{align}}
\newcommand{\eal}{\end{align}}
\newcommand{\nn}{\nonumber}
\newcommand{\ga}{\gamma}
\newcommand{\si}{\sigma}
\newcommand{\pa}{\partial}
\newcommand{\la}{\lambda}
\newcommand{\Z}{\mathbb{Z}}
\newcommand{\R}{\mathbb{R}}
\newcommand{\I}{\mathrm{i}}
\newcommand{\siul}{\sigma^{\mathrm{u,l}}}
\newcommand{\siu}{\sigma^{\mathrm{u}}}
\newcommand{\sil}{\sigma^{\mathrm{l}}}
\newcommand{\sipmu}{\sigma_\pm^{\mathrm{u}}}
\newcommand{\sipml}{\sigma_\pm^{\mathrm{l}}}
\newcommand{\sipmul}{\sigma_\pm^{\mathrm{u,l}}}
\newcommand{\simpul}{\sigma_\mp^{\mathrm{u,l}}}
\newcommand{\lau}{\lambda^{\mathrm{u}}}
\newcommand{\lal}{\lambda^{\mathrm{l}}}
\newcommand{\laul}{\lambda^{\mathrm{u,l}}}
\newcommand{\lz}{\ell^2(\Z)}
\newcommand{\liz}{\ell^1(\Z_\pm)}
\begin{document}

\title[Scattering Theory with Finite-Gap Backgrounds]{Scattering Theory with Finite-Gap Backgrounds: Transformation Operators and Characteristic Properties of Scattering Data}

\author[I. Egorova]{Iryna Egorova}
\address{Institute for Low Temperature Physics\\ 47,Lenin ave\\ 61103 Kharkiv\\ Ukraine}
\email{\href{mailto:iraegorova@gmail.com}{iraegorova@gmail.com}}

\author[J. Michor]{Johanna Michor}
\address{Faculty of Mathematics\\ University of Vienna\\
Nordbergstrasse 15\\ 1090 Wien\\ Austria\\ and International Erwin Schr\"odinger
Institute for Mathematical Physics\\ Boltzmanngasse 9\\ 1090 Wien\\ Austria}
\email{\href{mailto:Johanna.Michor@univie.ac.at}{Johanna.Michor@univie.ac.at}}
\urladdr{\href{http://www.mat.univie.ac.at/~jmichor/}{http://www.mat.univie.ac.at/\string~jmichor/}}

\author[G. Teschl]{Gerald Teschl}
\address{Faculty of Mathematics\\ University of Vienna\\
Nordbergstrasse 15\\ 1090 Wien\\ Austria\\ and International Erwin Schr\"odinger
Institute for Mathematical Physics\\ Boltzmanngasse 9\\ 1090 Wien\\ Austria}
\email{\href{mailto:Gerald.Teschl@univie.ac.at}{Gerald.Teschl@univie.ac.at}}
\urladdr{\href{http://www.mat.univie.ac.at/~gerald/}{http://www.mat.univie.ac.at/\string~gerald/}}

\keywords{Inverse scattering, Jacobi operators, finite-gap, steplike}
\subjclass[2010]{Primary 47B36, 81U40; Secondary 34L25, 39A10}
\thanks{Research supported by the Austrian Science Fund (FWF) under Grants No.\ Y330 and V120.}
\thanks{Math. Phys. Anal. Geom. {\bf 16}, 111--136 (2013)}

\begin{abstract}
We develop direct and inverse scattering theory for Jacobi operators (doubly infinite second order
difference operators) with steplike coefficients which are asymptotically close to different finite-gap
quasi-periodic coefficients on different sides. We give necessary and sufficient conditions for the
scattering data in the case of perturbations with finite second (or higher) moment.
\end{abstract}

\maketitle

\section{Introduction}

The present paper is concerned with inverse scattering theory for Jacobi operators (doubly infinite second order
difference operators) given by
\beq\label{def11}
H f(n)=a(n-1) f(n-1) + b(n) f(n)+ a(n) f(n+1),
\quad a(n)>0,\ b(n)\in\mathbb R,
\eeq
and acting on the Hilbert space $\ell^2(\Z)$ of doubly infinite square summable sequences.
We will consider this problem in the case where the coefficients of $H$ are asympto\-tically
close to the coefficients of two (in general different) finite-gap operators
 \beq
\label{JdefHpm} H^\pm f(n) = a^\pm(n)
f(n+1) + a^\pm(n-1) f(n-1) + b^\pm(n) f(n)
\eeq
as $n\to\pm\infty$\footnote{Here, the signs "$-$" and "$+$"
refer to the left and right half-axis, respectively.}. By a finite-gap operator we mean an operator associated with quasi-periodic
sequences whose spectrum consists of finitely many bands. In particular, this covers the case of
periodic or constant Jacobi operators.

\begin{dfn}
Let $q=0,1,2,\dots$ be a nonnegative integer. We say that $H$ has the $q$'th moment finite
with respect to $H^+$, $H^-$ if its coefficients satisfy
\beq \label{Jhypo}
\sum_{n = 0}^{\pm
\infty} |n|^q \Big(|a(n) - a^\pm(n)| + |b(n) - b^\pm(n)| \Big) < \infty.
\eeq
The set of all Jacobi operators $H$ satisfying \eqref{Jhypo} will be denoted by $\mathcal B_q(H^+,H^-)$.
\end{dfn}

Given two fixed limiting (or background) operators $H^\pm$ as above, the aim is to find necessary and sufficient conditions on the scattering data to
correspond to some operator $H$ in the class $\mathcal B_q(H^+,H^-)$.

In fact, the associated problem for the case $q=1$ is well studied (see \cite{emtqps}, \cite{emtstep}, \cite{emtstp2}, \cite{Khan}, \cite{Khan1}, \cite{voyu}).
However, as pointed out to us by A.\ K.\ Khanmamedov, the proof of Lemma~7.3 from \cite{emtqps} contains an error which consequently also affects our
follow-up papers \cite{emtstep} and \cite{emtstp2}.
 One aim of the present paper is to correct this error. Unfortunately, the properties of scattering data obtained below are not sufficient
to solve the inverse scattering problem in $\mathcal B_1(H^+,H^-)$, but they are sufficient to reconstruct a unique operator
$H$ which belongs to the class $\mathcal B_0(H^+,H^-)$. For $q\ge 2 $ this mismatch does not arise and
we give necessary and sufficient conditions for the scattering data to correspond to an operator in the class $\mathcal B_q(H^+,H^-)$.

We emphasize that the mismatch for $q=1$ only occurs if the limiting operators are genuine finite-gap operators and does
not occur in the case of constant limiting operators. In particular, the approach used here allows us to solve the inverse scattering
problem in the class $\mathcal B_1(H^+_{const},H^-_{const})$ as a special case, when the background
operators are constant and form a simple step,
\beq \label{Jconstb}
H^\pm_{const} f(n) = a^\pm f(n+1) + a^\pm f(n-1) + b^\pm f(n), \quad
a^\pm>0.
\eeq
Moreover, in Section~\ref{Contin} we show that in the case $H^+_{const}=H^-_{const}$ the reflection coefficient is continuous at the edges of the spectrum in the
resonance case for the finite first moment. Note, that for an arbitrary finite gap background $H^+=H^-$ the continuity of the reflection coefficient at the edges of the spectrum for $q=1$ remains an open question. The only case when this
fact was established  corresponds to a special type of the Jacobi operator (see \cite{Kh5}), which is associated with the Volterra (or Kac--van Moerbeke) lattice
\beq
b(n)\equiv 0,\quad \sum |n|(|a(2n) - A_1| + |a(2n+1)-A_2|)<\infty, \quad A_1\neq A_2.
\eeq
 
Roughly, the scheme for solving the direct scattering problem for the case of a fixed
finite moment $q$ is the following:

 {\it Step 1.}  Construct transformation operators $K_\pm$ associated with both sides. Those operators
intertwine between $H$ and $H^\pm$ in the sense that $H K_\pm = K_\pm H^\pm$, respectively.
Show that the corresponding kernels $K_\pm(n,m)$ are triangular.

 {\it Step 2.} (a) Relate decay properties of the kernels of the transformation operators with $q$.
(b) Derive formulas connecting these kernels with the coefficients $a(n), b(n)$.
(c) Investigate the analytical properties of the Jost solutions with respect to $q$.

 {\it Step 3.}
Investigate the spectrum of $H$; derive the properties of the scattering matrix,
in particular, its unitary property and asymptotical behavior at infinity; establish the connection
between left and right scattering data.

 {\it Step 4.}
Derive the left and right Marchenko equations, which are the main equations of the inverse problem and connect the transformation operators to the set $\mathcal S(H)$ of scattering data (consisting of the
scattering matrix, the discrete spectrum, and normalizing constants).  The kernels $F_\pm$ of the Marchenko equations only depend on $\mathcal S(H)$.

 {\it Step 5.} Establish decay properties of $F_\pm$ in terms of $q$.

The solution of the inverse scattering problem consists of the following steps:

 {\it Step 6.} (a) Now we are given a set $\mathcal S$ of the same structure as the set of scattering data and
with properties as described in Steps 3 and 5. This implies that we have given functions $F_\pm$
which satisfy the prescribed decay properties. These properties are sufficient to prove that
the left and right Marchenko equations are uniquely solvable with respect to the kernels of the
transformation operators. Invoking Step~2, (b), the kernels give rise to two sets of coefficients.
(b) Both restored coefficients can in general be well controlled on the associated half-axis; on the
opposite half-axis, they cannot be analyzed. To ensure that the solution lies within the class, one
has to show that both sets of coefficients have the $q$'th moment finite on the associated side.

  {\it Step 7.} This is technically the most challenging step. We prove a uniqueness theorem,
that is, we show that the two restored sets of coefficients are equal and give rise to a unique
Jacobi operator $H$ with $\mathcal S= \mathcal S(H)$.

The pioneering work for this scattering problem was done by Kay and Moses \cite{KM} and mathematically
rigorously by Faddeev \cite{Fad}. They focussed on studying the main analytical properties of the scattering
data (Step~3), on the derivation of the Marchenko equation (Step~4), and on proving its unique solvability (Step 6, (a)). Steps 1--2 and Step 6, (b), were
already studied by Marchenko \cite{M2} when solving the scattering problem on the half-axis.
However, later investigations by Deift and Trubowitz \cite{DT} showed that the properties of the scattering data listed in \cite{Fad} are not sufficient to prove the uniqueness theorem (Step 7). An example for a set
$\mathcal S$ was given there, which satisfied all conditions proposed in \cite{Fad}, but the restored
potential did not have the required decaying behavior on one half-axis. It turned out that the behavior of the scattering data at the edge
of the continuous spectrum plays a key role in the proof of the uniqueness theorem. For finite
second moments of perturbation this behavior was described in \cite{DT}. There the characteristic properties of the scattering data were derived and the direct/inverse scattering problems were solved in the respective class of Schr\"odinger operators. But the approach used in \cite{DT} was not applicable for  finite first moments. One has to emphasize, that a description of the characteristic properties of the scattering data for a class of potentials with finite first moments (these constitute the largest class of potentials for which direct/inverse scattering can be studied within the class) is a much more complicated problem.
It was solved by Marchenko in 1977, \cite{mar}. The condition on the
scattering data at the edge of the continuous spectrum is now referred to as Marchenko condition (see also the discussion before Theorem~\ref{Jtheor2}).
Marchenko's approach for solving the inverse scattering problem became the classic method
and was successfully generalized to several other types of operators and potentials, such as asymptotically periodic,
finite-gap non-periodic, or steplike potentials.

In this note we study some features of Marchenko's method applied to Jacobi operators on finite-gap backgrounds.
Scattering theory for Jacobi operators with constant background is a likewise classic
topic with a long tradition. Originally developed on an informal level by Kac, Case, and Geronimo \cite{CK}, \cite{dinv4}, \cite{GC},
the first rigorous results were given by Guseinov \cite{gu} with further extensions by Teschl \cite{tivp}, \cite{tjac}.
Moreover, these results have direct applications for solving the Toda lattice via the inverse scattering transform \cite{tjac} and
investigating its long-time asymptotics via the nonlinear steepest decent method (see, e.g., the review \cite{KrT}).
For the case of steplike backgrounds we refer to \cite{BE}, \cite{eg} and for applications to the
Toda lattice to \cite{bdme2}, \cite{bdmek}, \cite{dkkz}, \cite{vdo}. The case of (steplike)
periodic backgrounds was considered in \cite{emtqps}, \cite{emtstep}, \cite{emtstp2}, \cite{Khan}, \cite{Khan1}, \cite{voyu} and applications to the Toda lattice can be found in
 \cite{emtist}, \cite{emtsr}, \cite{kateptr}, \cite{katept}, \cite{KrT2}, and \cite{mtqptr}.

\section{Direct scattering problem}

{\it Steps 1--2.} (Estimates on the transformation operators, properties of the Jost solutions, etc.).
In the case of constant background operators, in particular, when they coincide with the discrete
Laplacian $H^+_{const}=H^-_{const}=H_0$, where $H_0y(n)=\frac 1 2 \{y(n-1) + y(n+1)\}$, the transformation operators convert the "exponents"  (the generalized eigenfunctions of $H_0$)
into the Jost solutions. Namely, if $\la\in\mathbb C$ is a spectral parameter, then these eigenfunctions are $z^{\pm n}$, where $\la=\frac 1 2 (z+z^{-1})$, $|z|\leq 1$. When the background operator
is a finite-gap operator, the role of "exponents" is played by its Weyl solutions.
Though the behavior of the finite-gap Weyl solutions remains exponential as $\la\to \infty$, they exhibit
a much more complicated structure than the simple exponents.

We first recall some basic facts on the spectral analysis of finite-gap Jacobi operators\footnote{For more details see Chapter 9 of \cite{tjac}.}.
Since we consider two different background operators, we introduce notations for both of them simultaneously.
Let the spectra of $H^\pm$ in \eqref{JdefHpm} consist of $r_\pm+1$ bands,
$$
\si_\pm:=\si(H^\pm) =
\bigcup_{j=0}^{r_\pm} [E_{2j}^\pm,E_{2j+1}^\pm],\quad
E_0^\pm < E_1^\pm < \cdots < E_{2r_\pm+1}^\pm.
$$
Associated with these sets are the Riemann surfaces of the functions
\begin{equation}\label{JdefP}
P_\pm(\la)= -\prod_{j=0}^{2r_\pm+1} \sqrt{\la-E_j^\pm},
\end{equation}
where $\sqrt{.}$ denotes the standard branch of the square root cut along $(-\infty,0)$.
Each of the operators $H^\pm$ is uniquely defined by its spectrum $\si_\pm$ and the divisor of poles of its Weyl function (as a function on the respective Riemann surface)
\beq\label{divW}\sum_{j=1}^{r^\pm}(\mu_j^\pm, \si_j^\pm),\ \mbox{where}\ \
\mu_j^\pm\in[E_{2j-1}^\pm,E_{2j}^\pm] \ \mbox{and}\  \si_j^\pm\in\{-1, 1\}.\eeq
It is more convenient to consider the spectral parameter in the complex plane with cuts than working on
two different Riemann surfaces. Therefore we identify $\mathbb C\setminus\si(H^+)$
with the upper sheet of the Riemann surface of $P_+$ and $\mathbb C\setminus\si(H^-)$ with the lower sheet
of the Riemann surface of $P_-$.
To distinguish the boundaries we denote the upper and lower points of the cuts along
$\sigma_\pm$ by $\sipmu$ and $\sipml$. Symmetric points of these cuts are denoted by
$\lau$ and $\lal$. In particular, for a function
$f$ we have
$$
f(\lau) :=
\lim_{\varepsilon\downarrow0} f(\lambda+\I\varepsilon), \quad
f(\lal) := \lim_{\varepsilon\downarrow0} f(\lambda-\I\varepsilon),
\quad \lambda\in\sigma_\pm.
$$

Let $\psi^\pm(\la,n)$ be the Weyl solutions of the spectral equations
\beq\label{J0.12}
H^\pm \psi(\la,n)=\la \psi(\la,n), \quad \la\in\mathbb C, \quad n\in \Z,
\eeq
which are uniquely defined by the conditions $\psi^\pm(\la,0)=1$ and
$\psi^\pm(\la,\cdot)\in\ell^2(\Z_\pm)$ as $\la\in\mathbb C\setminus\si_\pm$. Then the solution
$\psi^+(\la)$ (resp.\ $\psi^-(\la)$) coincides with the upper (resp.\ lower) branch of the Baker-Akhiezer function of the operator $H^+$ (resp.\ $H^-$). The twin branches of these functions are denoted by
 $\breve \psi^\pm(\la)$. They are solutions of \eqref{J0.12} as well, and satisfy $\breve\psi^\pm(\la,0)=1$
and  $\breve \psi^\pm(\la,\cdot)\in\ell^2(\Z_\mp)$ as
$\la\in\mathbb C\setminus\si_\pm$.
The solutions $\psi^\pm(\la,n)$, $\breve\psi^\pm(\la,n)$ are continuous till the boundary for
 $\la\to\laul\in\siul_\pm\setminus\partial\si_\pm$ as functions of the spectral parameter $\la$, where we denote
the set of spectral edges by $\partial\si_\pm:=\{E_0^\pm,\dots,E_{2r_\pm+1}^\pm\}$.
Moreover, they satisfy the symmetry condition
\beq \label{J0.6}
\psi^\pm(\lal,n)=
\overline{\psi^\pm(\lau,n)}=\breve\psi^\pm(\lau,n), \quad \la\in\si_\pm, \quad n\in \Z.
\eeq
To describe possible poles and other singularities of the Weyl solutions
we distinguish the following disjoint subsets of $\{\mu_1^\pm,\dots,\mu_{r_\pm}^\pm\}$
(compare \eqref{divW})
\begin{align}
\begin{split}
M^\pm &= \{\mu_j^\pm \,|\, \mu_j^\pm \in \R\setminus\si_\pm \mbox{
is a pole of } \psi^\pm(\la,1)\},\\
\hat{M}^\pm &= \{ \mu_j^\pm \,|\, \mu_j^\pm \in \partial \si_\pm \},
\end{split}
\end{align}
and introduce auxiliary functions by
\beq\label{0.63}
\delta_\pm(\la) := \prod_{\mu^\pm_j\in M_\pm}  (\la - \mu^\pm_j),\quad
\hat \delta_\pm(\la) := \delta_\pm(\la) \prod_{\mu^\pm_j \in \hat M_\pm} \sqrt{\la -
\mu^\pm_j},
\eeq
where $\prod=1$ if there are no multipliers. Accordingly, we set
\beq\label{deftilde}
\tilde\psi^\pm(\la,n)=\delta_\pm(\la) \psi^\pm(\la,n),\quad
\hat\psi^\pm(\la,n)=\hat\delta_\pm(\la) \psi^\pm(\la,n). \eeq
Then $\tilde\psi^\pm(\la,n)$ have no poles in open gaps of the spectra, satisfy the symmetry property \eqref{J0.6}, and have square root singularities at the points of the sets $\hat{M}^\pm$, whereas  $\hat\psi^\pm(\la,n)$ are continuous on $\mathbb C\setminus\si_\pm$ till the boundaries,
but in general violate the symmetry property.

Introduce the discrete Wronskians of the Weyl solutions by
\beq \label{new11}
W^\pm(\la):=a^\pm(n) \big(\breve\psi^\pm(\la,n) \psi^\pm(\la,n+1) -
\breve\psi^\pm(\la,n+1)\psi^\pm(\la,n)\big).
\eeq
Then
\beq\label{J2.9}
W^\pm(\la) =
\pm\frac{1}{g_\pm(\la)},\  \mbox{ where}\ \
g_\pm(\la):=\frac{\prod_{j=1}^{r_\pm}(\la-\mu_j^\pm)}{ P_\pm(\la)}
\eeq
are the Green functions of the operators  $H^\pm$ on the main diagonal at point $(0,0)$.
By the choice of the square root branch in $P_\pm(\la)$ the Green functions satisfy
\beq
\label{J0.10} \Im(g_\pm(\lau))>0, \quad \Im(g_\pm(\lal))<0, \quad
\la\in\si_\pm.
\eeq

The functions  $\psi^\pm(\la,\cdot)$ form complete orthogonal systems on the spectra $\si_\pm$ with
respects to the weights
 \beq \label{J1.121}
d\omega_\pm(\la) =
\frac{1}{2\pi\I} g_\pm(\la) d\la. \eeq Namely,
\begin{equation}
\oint_{\si_\pm}\overline{\psi^\pm(\la,m)}\psi^\pm(\la,n)
d\omega_\pm(\la) = \delta(n,m), \label{J1.14}
\end{equation}
where $\delta(n,m)$ is the Kronecker symbol and
\beq \label{1.141}
\oint_{\si_\pm}f(\la) d\la := \int_{\sipmu} f(\lau) d\la - \int_{\sipml} f(\lal) d\la.
\eeq

Next, we describe the properties of the transformation operators on finite-gap backgrounds, following \cite{Khan} and \cite{emtqps} with a small correction in the steplike case. Assume that the
perturbation coefficients $\{a(n)-a^\pm(n), b(n)-b^\pm(n)\}$ have minimal (i.e.\ first) finite moments
on the half-axes. Evidently, all remains valid for finite higher moments.

 \begin{lemma}\label{Jprj}
Let $H \in \mathcal B_1(H^+, H^-)$. Then there exist Jost solutions $\phi_\pm(\la,n)$ of the spectral problem
\beq\label{speceq}
H\phi(\la,n)=\la\phi(\la,n)
\eeq
which are asymptotically close to the Weyl solutions $\psi^\pm(\la,n)$ of the background operators
$H^\pm$ as $n\to\pm\infty$. These solutions can be represented as
\beq \label{J3.16}
\phi_\pm(\la,n) = \sum_{m=n}^{\pm \infty}
K_\pm(n,m) \psi^\pm(\la, m), \ \ \la\in\mathbb C,
\eeq
where the kernels $K_\pm(n,\cdot)$ of the transformation operators are real valued and satisfy for
$\pm m > \pm n$
 \beq \label{chto}
\left|K_\pm(n,m)\right| \leq C_\pm(n)\ \prod_{j=n-{\scriptstyle{0 \atop 1}}}^{\pm\infty}\frac{a(j)}{a^\pm(j)}\
\sum_{j=[\frac{m+n}{2}] }^{\pm \infty} \Big(|a(j)-a^\pm(j)| + |b(j)-b^\pm(j)|\Big).
\eeq
The functions $C_\pm(n) > 1$ decay as $n\rightarrow \pm \infty$.
\end{lemma}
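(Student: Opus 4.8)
The plan is to look for the Jost solutions in the form \eqref{J3.16} with a $\la$-independent kernel, to reduce \eqref{speceq} to a closed summation equation for $K_\pm$ of Volterra type, and to solve the latter by successive approximations; I would carry out the ``$+$'' case in detail, the ``$-$'' case being its mirror image. Substituting \eqref{J3.16} into \eqref{speceq} and using that, in the $m$-variable, $\psi^+(\la,\cdot)$ obeys the three-term recursion $H^+\psi^+=\la\psi^+$, one shifts indices so that both sides become expansions in $\{\psi^+(\la,m)\}_{m\ge n-1}$. Since $\{\psi^+(\la,\cdot)\}$ is a complete orthogonal system on $\si_+$ by \eqref{J1.14}, the expansion of a summable sequence in these functions is unique, so coefficients of $\psi^+(\la,m)$ may be compared term by term. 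The coefficient of $\psi^+(\la,n-1)$ yields the first-order recursion
\[
a(n-1)\,K_+(n-1,n-1)=a^+(n-1)\,K_+(n,n),
\]
which telescopes; normalizing $K_+(n,n)\to1$ as $n\to+\infty$ (the requirement that $\phi_+$ be asymptotically close to $\psi^+$) gives the convergent product $K_+(n,n)=\prod_{j=n}^{\infty}a^+(j)/a(j)$, whose reciprocal is precisely the product prefactor in \eqref{chto}. The coefficient of $\psi^+(\la,n)$ expresses $K_+(n,n+1)$ through $K_+(n,n)$, $K_+(n-1,n)$ and $b(n)-b^+(n)$, while the coefficients of $\psi^+(\la,m)$ for $m\ge n+1$ give the bulk equation
\[
a(n-1)K_+(n-1,m)+b(n)K_+(n,m)+a(n)K_+(n+1,m)=a^+(m)K_+(n,m+1)+b^+(m)K_+(n,m)+a^+(m-1)K_+(n,m-1).
\]

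The next step is to integrate this bulk equation; it is the discrete counterpart of the hyperbolic equation $K_{xx}-K_{yy}=q(x)K$ governing transformation operators in the continuous setting, and one proceeds as there, by summing along characteristics. Rewriting it as (a second difference of $K_+$ in $n$) minus (a second difference of $K_+$ in $m$) equals (lower-order terms carrying the factors $a-a^+$ and $b-b^+$) and summing over the appropriate double range, the two second-difference operators telescope; the diagonal and first-off-diagonal data enter as a free term, and one arrives at a summation equation of the schematic form
\[
K_+(n,m)=F_+^{(0)}(n,m)+\sum(\text{coefficients})\,K_+(\cdot,\cdot),
\]
in which $F_+^{(0)}(n,m)$ is built from $K_+(\cdot,\cdot)$ on the diagonal and first off-diagonal, hence bounded by $\mathrm{const}\cdot K_+(n,n)^{-1}\,\tau_+([\tfrac{m+n}{2}])$ with $\tau_+(\ell):=\sum_{j\ge \ell}(|a(j)-a^+(j)|+|b(j)-b^+(j)|)$ — the midpoint $[\tfrac{m+n}{2}]$ appearing because information propagates with unit speed in each index — and in which the sum on the right involves only values $K_+(n',m')$ with $n'+m'$ strictly larger than $n+m$, the triangular (Volterra) structure. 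Since every quantity occurring is real, any solution is automatically real-valued. It is convenient to pull out the diagonal factor, $K_+(n,m)=K_+(n,n)L_+(n,m)$, so that $L_+$ solves a summation equation whose free term and kernel are controlled purely by one-sided tails of the perturbation.

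One then solves the equation for $L_+$ by successive approximations, $L_+=\sum_{k\ge0}L_+^{(k)}$ with $L_+^{(0)}$ the normalized free term and $L_+^{(k+1)}$ the summation operator applied to $L_+^{(k)}$. The decisive estimate is that, by the triangular support and the monotonicity of $\tau_+$, the $k$-th iterate obeys a bound of the form $|L_+^{(k)}(n,m)|\le \mathrm{const}\cdot\tau_+([\tfrac{m+n}{2}])\cdot\tfrac{1}{k!}\big(\sum_{j\ge n}(|a(j)-a^+(j)|+|b(j)-b^+(j)|)\big)^k$; summing the series gives absolute convergence and
\[
|K_+(n,m)|\le C_+(n)\,K_+(n,n)^{-1}\,\tau_+\big([\tfrac{m+n}{2}]\big),\qquad
C_+(n)=\exp\Big(\mathrm{const}\sum_{j\ge n}(|a(j)-a^+(j)|+|b(j)-b^+(j)|)\Big),
\]
which is \eqref{chto}; here $C_+(n)>1$, it is finite for every $n$, and it decreases to $1$ as $n\to+\infty$. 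Finally one checks, by reversing the telescoping, that $\phi_+$ defined through \eqref{J3.16} with this kernel solves \eqref{speceq} and is asymptotically close to $\psi^+$; since $K_+$ is independent of $\la$ and decays in $m$ while $\psi^+(\la,\cdot)$ is bounded on $\si_+$ and exponentially decaying off it, the series \eqref{J3.16} converges for all $\la\in\mathbb C$ and is analytic wherever $\psi^+$ is.

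The hard part is the iteration estimate with \emph{exactly} the prefactor and the midpoint index $[\tfrac{m+n}{2}]$ required for \eqref{chto}, together with the decay of $C_\pm(n)$. Unlike the constant-background case, the quasi-periodic coefficients $a^\pm(m)$ enter the $m$-differences with genuine $m$-dependence, the discrete d'Alembert reduction is no longer transparent, and the ratios $a(j)/a^\pm(j)$ produced by the summation kernel only partially telescope into the prefactor — the non-telescoping remainder having to be absorbed into $C_\pm(n)$ in a way that still tends to $1$. Keeping precise track of these factors through the telescoping and the iteration is exactly the point at which the earlier argument in \cite{emtqps} was deficient, and making it work is the substance of the present correction.
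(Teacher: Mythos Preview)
The paper does not give its own proof of this lemma; it states the result and cites \cite{Khan} and \cite{emtqps} for the construction, adding only the remark that in the steplike case $C_\pm(n)$ grows as $n\to\mp\infty$. Your outline --- substitute the ansatz \eqref{J3.16} into \eqref{speceq}, use $H^\pm\psi^\pm=\la\psi^\pm$ and the completeness \eqref{J1.14} to compare coefficients, read off the diagonal recursion for $K_\pm(n,n)$ and the bulk identity (which is exactly \eqref{Jsmes} in the paper), telescope along characteristics to a Volterra-type summation equation, and iterate with the $1/k!$ gain --- is precisely the standard argument carried out in those references, so your approach matches what the paper relies on. The paper itself records \eqref{Jsmes} a posteriori, deriving it from the integral formula $K_\pm(n,m)=\oint\phi_\pm\breve\psi^\pm\,d\omega_\pm$ once the Jost solutions are in hand, but that is the converse direction.

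One point in your closing paragraph is misattributed. The error in \cite{emtqps} that the present paper sets out to correct is \emph{not} in the transformation-operator estimate; it is in Lemma~7.3 there, whose role here is played by Lemma~\ref{Jlem4.1} (the decay conditions ${\bf IV_q}$ on the Marchenko kernels $F_\pm$). That is why the paper supplies a full proof of Lemma~\ref{Jlem4.1} with the amended exponent $\alpha=0$ when $q=1$, while Lemma~\ref{Jprj} is merely quoted. The ``small correction in the steplike case'' mentioned just before Lemma~\ref{Jprj} refers only to the growth of $C_\pm(n)$ on the opposite half-axis, not to a defect in the iteration.
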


Note that in the steplike case, $C_\pm(n)$ grow with the order of the products in
\eqref{chto} as $n\to\mp\infty$. In the case $H^+=H^-$ they remain bounded and are usually replaced by constants.
From \eqref{J3.16} and \eqref{J1.14} it follows that
$$
K_\pm(n,m)=\oint_{\si_\pm}\phi_\pm(\la,n)
\breve\psi_\pm(\la,m)d\omega_\pm(\la), \quad \pm m\geq \pm n,
$$
and $K_\pm(n,m)=0$ for $\pm m<\pm n$. Therefore,
\begin{align}\label{Jsmes}
\begin{split}
& a(n-1)K_\pm(n-1,m) + b(n) K_\pm(n,m) + a(n) K_\pm(n+1,m) \\
& = a^\pm(m-1)K_\pm(n,m-1) + b^\pm(m) K_\pm(n,m) + a^\pm(m) K_\pm(n,m+1).
\end{split}
\end{align}
Evaluating \eqref{Jsmes} at $m=n\mp 1$ and $m=n$ we obtain
\begin{align} \label{J0.13}
a(n) &= a^+(n)\frac{K_+(n+1, n+1)}{K_+(n,n)} = a^-(n)\frac{K_-(n, n)}{K_-(n+1,n+1)}, \\
\begin{split}
b(n) &= b^+(n) + a^+(n)\frac{K_+(n, n+1)}{K_+(n,n)} -
a^+(n-1) \frac{K_+(n-1, n)}{K_+(n-1,n-1)}, \\
b(n) &= b^-(n) + a^-(n-1)\frac{K_-(n, n-1)}{K_-(n,n)} - a^-(n)
\frac{K_-(n+1, n)}{K_-(n+1,n+1)}.
\end{split}
\end{align}
With these formulas we can compute the coefficients of the perturbed operator $H$ via the kernels
of the left and right transformation operators.

For $\la \in \siu_\pm\cup\sil_\pm$, equations \eqref{speceq} also have solutions
$$
\breve\phi_\pm(\la,n)=\sum_{m=n}^{\pm \infty} K_\pm(n,m)
\breve\psi^\pm(\la, m), \quad \la \in \siu_\pm\cup\sil_\pm,
$$
which cannot be continued to the complex plane under condition \eqref{Jhypo} for fixed $q$.
For $\la \in \si_\pm$, we have $\breve\phi_\pm(\la,n)=\overline{\phi_\pm(\la,n)}$,
and \eqref{Jhypo}, \eqref{J3.16}, and \eqref{new11} imply
\beq\label{J0.62}
W(\overline{\phi_\pm(\la)}, \phi_\pm(\la)) =
W^\pm(\la) = \pm g_\pm(\la)^{-1}.
\eeq

For any $q=1,2,\dots$, due to \eqref{J3.16} the Jost solutions $\phi_\pm$ inherit almost all properties of their background Weyl solutions, in particular, the structure of poles and square root singularities. But at the edges of the spectra $\pa\si_\pm$ the influence of the moment $q$ is already perceptible. For $q=2,3,\dots$, the Jost solutions are differentiable functions of the local parameter $\sqrt{\la-E}$ at $E\in\pa\si_\pm$, but for $q=1$ they cease to be.
This  complicates investigating the scattering data at the edges of the spectra.

We turn to {\it Step 3} next.
Define in analogy to \eqref{0.63}, \eqref{deftilde}
\beq\label{J0.64}
\tilde\phi_\pm(\la,n)=\delta_\pm(\la)
\phi_\pm(\la,n),\quad \hat\phi_\pm(\la,n)=\hat\delta_\pm(\la)
\phi_\pm(\la,n),
\eeq
and introduce the sets
\beq
\label{J2.5}
\si:=\si_+\cup\si_-, \quad
\si^{(2)}:=\si_+\cap\si_-, \quad
\si_\pm^{(1)}=\clos \big(\si_\pm\setminus\si^{(2)}\big).
\eeq
Then $\si$ is the absolutely continuous spectrum of $H$ and $\si_+^{(1)}\cup \si_-^{(1)}$
and $\si^{(2)}$ are the spectra of multiplicity one and two (cf.\ \cite{emtstp2}). Denote by $\inte(\si)$ the sets of inner points of the spectrum, $\inte(\si):=\si\setminus\pa\si$. In addition to the continuous spectrum,  $H$ has a finite number of eigenvalues
$\si_d=\{\la_1,\dots,\la_p\} \subset \R \setminus \si$ (finiteness of $\si_d$ is proven in \cite{tosc}).
We introduce normalizing constants by
\beq \label{Jnorming}
\ga_{\pm, k}^{-1}=\sum_{n \in
\Z}|\tilde\phi_\pm(\la_k, n)|^2, \quad 1 \leq k \leq p.
\eeq
The Wronskian of two Jost solutions, given by
\beq\label{J2.8}
W(\la):=a(n)
\left(\phi_-(n)\phi_+(n+1) - \phi_-(n+1)\phi_+(n)\right),
\eeq
is meromorphic on $\mathbb{C}\setminus\si$, since the Jost solutions are meromorphic there,
with possible poles on $M_+\cup M_-\cup(\hat M_+\cap\hat M_-)$ and possible square
root singularities on $\hat M_+\cup\hat M_-\setminus (\hat M_+\cap \hat M_-)$.
Consider the scattering relations
\beq\label{J6.16}
T_\mp(\la)
\phi_\pm(\la,n) =\overline{\phi_\mp(\la,n)} +
R_\mp(\la)\phi_\mp(\la,n), \quad\la\in\simpul.
\eeq
For steplike cases, the entries of the scattering matrix generally live on different sets.
The left reflection coefficient $R_-$ and transmission coefficient $T_-$ are defined on the upper and lower sides of $\si_-$, the right coefficients $R_+$ and $T_+$ are defined on the sides of $\si_+$.
The scattering data $\mathcal S$ has the following structure,
 \begin{align}\label{J4.6}
\begin{split}
{\mathcal S} &= \big\{ R_+(\la),\,T_+(\la),\,
\la\in\si_+^{\mathrm{u,l}}; \,
R_-(\la),\,T_-(\la),\, \la\in\si_-^{\mathrm{u,l}};\\
& \qquad \la_1,\dots, \la_p \in\mathbb{R} \setminus
(\si_+\cup\si_-),\, \ga_{\pm, 1}, \dots, \ga_{\pm, p}
\in\mathbb{R}_+\big\}.
\end{split}
\end{align}
The entries of $\mathcal S$ are not independent of each other, their dependencies are collected in the
next lemma (see \cite{emtstp2}). These results hold for any fixed $q=1,2,\dots$ and might be specified
for $q=2,3,\dots$, which is not of interest here.
Since the conditions in this list are not independent, some of them are partly covered by others. The dependence of $\mathcal S$ on $q$ will be expressed by property ${\bf IV_q}$ in Lemma~\ref{Jlem4.1} below.

\begin{lemma} \label{Jlem2.3}
Let $H\in\mathcal B_q(H^+,H^-)$ for fixed $q=1,2,\dots$.
Then the entries of the scattering matrix of $H$ have the following properties

${\bf I}$.
$$
\begin{array}{lll}
{\bf (a)}&T_\pm(\lau) =\overline{T_\pm(\lal)},\ R_\pm(\lau) =\overline{R_\pm(\lal)}& \la\in\si_\pm, \nn \\
{\bf (b)}& T_\pm(\la)\overline{T_\pm^{-1}(\la)}
 = R_\pm(\la),&
\la\in\si_\pm^{(1)},\nn\\
{\bf (c)}& 1 - |R_\pm(\la)|^2 =
g_\pm(\la)g_\mp^{-1}(\la)\,|T_\pm(\la)|^2,
&\la\in\si^{(2)},\\
{\bf (d)}&\overline{R_\pm(\la)}T_\pm(\la) +
R_\mp(\la)\overline{T_\pm(\la)}=0,& \la\in\si^{(2)}.
\end{array}
$$

${\bf II}$. The functions $T_\pm(\la)$ can be continued analytically in
 $\mathbb{C} \setminus (\si\cup M_\pm\cup\breve
M_\pm\cup\si_d)$, where they satisfy
\beq \label{J2.18}
\frac{1}{T_+(\la) g_+(\la)} = \frac{1}{T_-(\la)
g_-(\la)} =:W(\la).
\eeq
The function $W(\la)$ has the following properties:
\begin{enumerate}[{\bf (a)}]
\item
$\tilde W(\la) = \delta_+(\la)\delta_-(\la)W(\la)$ is holomorphic in $\mathbb{C}\setminus\si$ with
simple zeros at $\la_k$,
\beq \label{J2.11}
\Big(\frac{ d\tilde W}{d \la}(\la_k)\Big)^2
=\ga_{+,k}^{-1}\ga_{-,k}^{-1}.
\eeq
In addition, $\overline{\tilde W(\lau)}=\tilde W(\lal)$ as $ \la\in\si$ and
$\tilde W(\la)\in\mathbb{R}$ as $ \la\in\mathbb{R}\setminus \sigma$.

\item $\hat W(\la) = \hat\delta_+(\la) \hat\delta_-(\la)
W(\la)$ is continuous on $\mathbb{C}\setminus\si$ up to the boundary $\siu\cup\sil$.
It can have zeros on $\Sigma_v:=\pa\si\cup(\pa\si_+^{(1)}
\cap\pa\si_-^{(1)})$\footnote{This is the set of possible virtual levels of $H$.} and does not vanish at any other point of $\si$. If $\hat W(E)=0$ for
$E\in\Sigma_v$, then
\begin{align} \label{Jestim}
\hat W^{-1}(\la) &= O\big((\la -
E)^{-1/2}\big), \quad\mbox{for $\la \in \si$ close to $E$}, \\
 \label{Jestim2} \hat W^{-1}(\la) &=
O\big((\la-E)^{-1/2-\varepsilon}\big), \quad \mbox{for
$\la\in\mathbb C\setminus\si$ close to $E$}.
\end{align}
\item $\, T_+(\infty)=T_-(\infty)>0$.
\end{enumerate}

${\bf III}$. \begin{enumerate}[{\bf (a)}]
 \item The reflection coefficients $R_\pm(\la)$ are continuous functions on $\inte(\sipmul)$. \\ If
$E\in\pa\si_+\cup \partial\si_-$ and $\hat W(E)\neq 0$, then
$R_\pm(\la)$ are also continuous at $E$.
\item If
$E\in\pa\si_+\cap \partial\si_-$ and $\hat W(E)\neq 0$, then
\beq\label{JP.1}
R_\pm(E) = \left\{
\begin{array}{c@{\quad\mbox{as}\quad}l}
-1 & E\notin\hat M_\pm,\\
1 & E\in\hat M_\pm. \end{array}\right.
\eeq
\end{enumerate}
\end{lemma}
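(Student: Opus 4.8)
The plan is to express every entry of the scattering matrix through Wronskians of the Jost solutions $\phi_\pm$ and then read off all the listed relations from the structure of $\phi_\pm$ — which by \eqref{J3.16} inherits the poles and square-root singularities of $\psi^\pm$ — together with \eqref{J0.62} and \eqref{J0.10}, following the scheme of \cite{emtstp2}. Fix $\la\in\simpul$ and apply the Wronskian $W(\,\cdot\,,\phi_\mp)$ to the scattering relation \eqref{J6.16}. Since $W(\phi_\mp,\phi_\mp)=0$, $W(\overline{\phi_\mp},\phi_\mp)=W^\mp(\la)=\mp g_\mp(\la)^{-1}$ by \eqref{J0.62}, and $W(\phi_\pm,\phi_\mp)=\mp W(\la)$ with $W(\la)$ as in \eqref{J2.8}, one obtains $T_\pm(\la)W(\la)=g_\pm(\la)^{-1}$, which is \eqref{J2.18}; because $W(\la)$ and $g_\pm(\la)$ are meromorphic on $\mathbb C\setminus\si$ with the poles and branch points recorded above, this relation also provides the continuation of $T_\pm$ to $\mathbb C\setminus(\si\cup M_\pm\cup\breve M_\pm\cup\si_d)$ asserted in \textbf{II}. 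Applying instead $W(\,\cdot\,,\overline{\phi_\pm})$ to \eqref{J6.16} and using \eqref{J0.62}, \eqref{J2.18} yields $R_\pm(\la)=\mp W(\phi_\mp(\la),\overline{\phi_\pm(\la)})/W(\la)$. Property \textbf{I(a)} is then immediate from $\phi_\pm(\lal,n)=\overline{\phi_\pm(\lau,n)}$, which follows from \eqref{J3.16}, the reality of the kernels $K_\pm$ (Lemma~\ref{Jprj}), and the symmetry \eqref{J0.6} of the background Weyl solutions.

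For the algebraic identities \textbf{I(b)}--\textbf{(d)} I would argue pointwise on the spectrum. On $\si^{(2)}$ all four solutions $\phi_\pm(\la,\cdot),\overline{\phi_\pm(\la,\cdot)}$ are boundary values and, by \eqref{J0.62} and \eqref{J0.10}, $\{\phi_-,\overline{\phi_-}\}$ is a basis of the two-dimensional solution space; expanding $\phi_+$ and $\overline{\phi_+}$ in this basis by means of \eqref{J6.16} and its complex conjugate and comparing coefficients gives \textbf{I(c)} and \textbf{I(d)} after substituting $T_\pm=g_\pm^{-1}/W$. On $\si_\pm^{(1)}$ the operator $H^\mp$ has a gap, so $\phi_\mp(\la,\cdot)$ is holomorphic and real there; conjugating \eqref{J6.16} and comparing with the original relation in the basis $\{\phi_\pm,\overline{\phi_\pm}\}$ produces the reflection-only identity \textbf{I(b)} (and, as a by-product, $|R_\pm|=1$ on $\si_\pm^{(1)}$).

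Next come the properties of $W$ in \textbf{II}. Multiplying \eqref{J2.18} by $\delta_+\delta_-$ cancels the poles of the Jost solutions on $M_\pm$, so $\tilde W=\delta_+\delta_- W$ is holomorphic on $\mathbb C\setminus\si$; its real zeros off $\si$ are exactly the points where $\phi_+(\la,\cdot)$ and $\phi_-(\la,\cdot)$ are proportional, i.e.\ the eigenvalues $\la_k$. Their simplicity and the identity \eqref{J2.11} follow from a Green-type computation: differentiating $H\phi_\pm=\la\phi_\pm$ in $\la$ and summing the telescoping quantity $a(n)\big(\phi_\pm\dot\phi_\mp-\dot\phi_\pm\phi_\mp\big)$ over $n$ expresses $\dot{\tilde W}(\la_k)$ through $\sum_n\tilde\phi_+(\la_k,n)\tilde\phi_-(\la_k,n)$, which one combines with the proportionality constant in $\phi_+(\la_k,\cdot)=c_k\phi_-(\la_k,\cdot)$ and the definition \eqref{Jnorming} of $\ga_{\pm,k}$; reality and the boundary symmetry of $\tilde W$ are inherited from $\phi_\pm$. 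For \textbf{II(b)}, the extra factors $\sqrt{\la-\mu_j^\pm}$ in $\hat\delta_\pm$ make $\hat\phi_\pm$, hence $\hat W=\hat\delta_+\hat\delta_- W$, continuous up to $\siu\cup\sil$; away from $\Sigma_v$ the identity \eqref{J0.62} together with $g_\pm\neq0,\infty$ on $\inte(\si_\pm)$ forces $\hat W\neq0$, whereas near a zero $E\in\Sigma_v$ the bounds \eqref{Jestim}, \eqref{Jestim2} come from the local behaviour of $\phi_\pm$ in the uniformising parameter $\sqrt{\la-E}$ — the weaker exponent for $\la\notin\si$ being precisely the imprint of the fact, noted after Lemma~\ref{Jprj}, that for $q=1$ the Jost solutions need not be differentiable in $\sqrt{\la-E}$ at $\partial\si$. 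Finally \textbf{II(c)} follows by letting $\la\to\infty$ in $T_\pm=1/(Wg_\pm)$, using the asymptotics of $\psi^\pm(\la,\cdot)$ and of $K_\pm(n,n)=\prod_j a(j)/a^\pm(j)>0$ from \eqref{J0.13}.

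For \textbf{III}, continuity of $R_\pm$ on $\inte(\sipmul)$ is read off from $R_\pm=\mp W(\phi_\mp,\overline{\phi_\pm})/W$: there the numerator is continuous and the denominator — which up to the regularising factors $\hat\delta_+\hat\delta_-$ equals $\hat W$ — does not vanish, and at $E\in\partial\si_+\cup\partial\si_-$ with $\hat W(E)\neq0$ the $\hat\delta$-regularised solutions remain continuous up to $E$, giving continuity there too. For \textbf{III(b)}, at $E\in\partial\si_+\cap\partial\si_-$ with $\hat W(E)\neq0$ one has $W(E)\neq0$; if $E\notin\hat M_\pm$ the two Baker--Akhiezer branches merge at the edge, so $\overline{\phi_\pm(E,\cdot)}=\phi_\pm(E,\cdot)$ and the above formula gives $R_\pm(E)=-1$, whereas if $E\in\hat M_\pm$ the square-root branch point at $E$ reverses the sign of the singular part of $\overline{\phi_\pm}$ relative to $\phi_\pm$, and comparing leading coefficients in the same ratio yields $R_\pm(E)=+1$. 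The technically most demanding point is \textbf{II(b)} — pinning down exactly where $\hat W$ can vanish and proving the estimates \eqref{Jestim}, \eqref{Jestim2} at the virtual levels — since this requires a sharp local analysis of the Jost solutions at the band edges $\partial\si_\pm$, precisely where the reduced regularity at $q=1$ is felt; the derivative identity \eqref{J2.11} is the other place needing a careful (if standard) computation.
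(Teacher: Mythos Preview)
Your sketch is correct and follows the standard Wronskian calculus for steplike finite-gap backgrounds; the paper itself does not prove Lemma~\ref{Jlem2.3} but simply refers to \cite{emtstp2}, and your argument is precisely the scheme carried out there (which you also cite). The one place to be slightly more careful is your claim that ``away from $\Sigma_v$ the identity \eqref{J0.62} together with $g_\pm\neq0,\infty$ on $\inte(\si_\pm)$ forces $\hat W\neq0$'': on $\si^{(2)}$ this follows from $|T_\pm|^2=g_\mp g_\pm^{-1}(1-|R_\pm|^2)$ and $|R_\pm|\le1$, but on $\si_\pm^{(1)}$ one needs the separate observation that $\phi_\pm$ and $\overline{\phi_\pm}$ are linearly independent there while $\phi_\mp$ is a real solution, so $W(\phi_\mp,\phi_\pm)$ and $W(\phi_\mp,\overline{\phi_\pm})$ cannot both vanish — otherwise this is exactly the argument in \cite{emtstp2}.
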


\vskip 0.2cm
{\it Remarks.}
\begin{itemize}
\item For any fixed $q=2,3,\dots$, properties \eqref{Jestim} and \eqref{Jestim2} can be replaced by
(see \cite{BET}):
{\it If $\hat W(E)=0$ for
    $E\in\Sigma_v$, then $\hat W(\la)= C\sqrt{\la - E}(1+o(1))$, $C\neq 0$, for $\la\to E$.}
\item For any fixed $q=2,3,\dots$, property ${\bf III}$, ${\bf (a)}$ is given by:
{\it The reflection coefficients $R_\pm(\la)$ are continuous functions on $\sipmul$.}
\end{itemize}

\vskip 0.2cm
{\it Steps 4--5} (The Marchenko equations and estimates on their kernels)

\begin{theorem}{\rm (\cite{emtstp2})}
Let $H\in\mathcal B_q(H^+,H^-)$ for a fixed
$q=1,2,\dots$. Then the kernels of the transformation operators satisfy the Marchenko equations
\beq \label{Jglm1 q}
K_\pm(n,m) + \sum_{\ell=n}^{\pm \infty}K_\pm(n,\ell)F_\pm(\ell,m) =
\frac{\delta(n,m)}{K_\pm(n,n)}, \quad \pm m \geq \pm n,
\eeq
where
\begin{align}\label{kerM}
\begin{split}
&F_\pm(m,n) = \oint_{\si_\pm} R_\pm(\la) \psi^\pm(\la,m)
\psi^\pm(\la,n) d\omega_\pm \\
& + \int_{\si_\mp^{(1),u}}
|T_\mp(\la)|^2 \psi^\pm(\la,m) \psi^\pm(\la,n) d\omega_\mp +
\sum_{k=1}^p \ga_{\pm,k} \tilde\psi^\pm(\la_k,n)
\tilde\psi^\pm(\la_k,m).
\end{split}
\end{align}
\end{theorem}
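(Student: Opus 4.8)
\emph{Sketch of proof.}
We establish the ``$+$'' equation; the ``$-$'' equation follows by interchanging the roles of the two half-axes and using the opposite choice of signs in \eqref{J6.16}. Fix $n\in\Z$ and $m\ge n$, and rewrite \eqref{J6.16} as
\[
\overline{\phi_+(\la,n)}=T_+(\la)\,\phi_-(\la,n)-R_+(\la)\,\phi_+(\la,n),\qquad \la\in\si_+^{\mathrm{u,l}}.
\]
Multiply this by $\psi^+(\la,m)$ and apply $\oint_{\si_+}(\,\cdot\,)\,d\omega_+$, understood via \eqref{1.141}. Inserting the transformation operator representation \eqref{J3.16} into each Jost solution and using the orthogonality relation \eqref{J1.14} together with $\breve\psi^+=\overline{\psi^+}$ on $\si_+$, the $\overline{\phi_+}$-term collapses to $K_+(n,m)$ (here $m\ge n$ is used), while the $R_+$-term becomes $\sum_{\ell\ge n}K_+(n,\ell)$ times the first summand of $F_+$ in \eqref{kerM}; the interchange of the series with the contour integrals is justified by the kernel estimate \eqref{chto} of Lemma~\ref{Jprj}. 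Thus the asserted Marchenko equation is equivalent to the identity
\beq\label{Jaux}
\oint_{\si_+}T_+(\la)\,\phi_-(\la,n)\,\psi^+(\la,m)\,d\omega_+ =\frac{\delta(n,m)}{K_+(n,n)}-\int_{\si_-^{(1),u}}|T_-(\la)|^2\,\phi_+(\la,n)\,\psi^+(\la,m)\,d\omega_- -\sum_{k=1}^p\ga_{+,k}\,\tilde\phi_+(\la_k,n)\,\tilde\psi^+(\la_k,m),
\eeq
because $\phi_+(\la,n)=\sum_{\ell\ge n}K_+(n,\ell)\psi^+(\la,\ell)$ and $\tilde\phi_+(\la_k,n)=\sum_{\ell\ge n}K_+(n,\ell)\tilde\psi^+(\la_k,\ell)$ turn the last two terms of \eqref{Jaux} into $\sum_{\ell\ge n}K_+(n,\ell)$ times the remaining two summands of $F_+$.

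To prove \eqref{Jaux}, combine \eqref{J2.18} with $d\omega_+=\frac{1}{2\pi\I}g_+(\la)\,d\la$ to get $T_+(\la)\,d\omega_+=\frac{1}{2\pi\I}W(\la)^{-1}\,d\la$, so that the left-hand side of \eqref{Jaux} equals
\[
\frac{1}{2\pi\I}\oint_{\si_+}\frac{\phi_-(\la,n)\,\psi^+(\la,m)}{W(\la)}\,d\la =\frac{1}{2\pi\I}\oint_{\si_+}\frac{\tilde\phi_-(\la,n)\,\tilde\psi^+(\la,m)}{\tilde W(\la)}\,d\la =:\frac{1}{2\pi\I}\oint_{\si_+}h(\la)\,d\la,
\]
the middle identity using \eqref{J0.64}, \eqref{deftilde} and $\tilde W=\delta_+\delta_-W$. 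In this form $h$ is meromorphic on $\mathbb C\setminus\si$: by Lemma~\ref{Jlem2.3}, ${\bf II}$, ${\bf (a)}$, $\tilde W$ is holomorphic there with only simple zeros, located at the eigenvalues $\la_k$, while $\tilde\phi_-$ and $\tilde\psi^+$ have no poles in the open gaps, so the only singularities of $h$ in $\mathbb C\setminus\si$ are simple poles at the $\la_k$ together with (integrable) square-root singularities at $\pa\si$.

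We now deform the contour $\oint_{\si_+}$ outward through $\mathbb C\setminus\si$ toward $\la=\infty$, picking up: (i) the residues of $h$ at the $\la_k$; (ii) the contribution of the cut $\si_-^{(1)}$ lying outside $\si_+$ (on which $\psi^+$ and $\phi_+$ are analytic, so that only $\phi_-$ and $W$ jump); (iii) the contribution of the circle $|\la|=R$ as $R\to\infty$. For (i): $\tilde W$ has a simple zero at $\la_k$, $\tilde\phi_+(\la_k,\cdot)$ and $\tilde\phi_-(\la_k,\cdot)$ are proportional (both lie in $\ell^2(\Z)$), and $\ga_{\pm,k}^{-1}=\sum_n\tilde\phi_\pm(\la_k,n)^2$ by \eqref{Jnorming}; tracking the proportionality constant via \eqref{J2.11} gives $\Res_{\la_k}h=\pm\ga_{+,k}\tilde\phi_+(\la_k,n)\tilde\psi^+(\la_k,m)$, which reproduces the discrete sum in \eqref{Jaux}. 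For (ii): on $\si_-^{(1)}$ the Green function $g_-(\lau)$ is purely imaginary (cf.\ \eqref{J2.9}, \eqref{J0.10}) and, by Lemma~\ref{Jlem2.3}, ${\bf I}$, ${\bf (b)}$, the scattering relation reduces to $\overline{\phi_-(\la,n)}+R_-(\la)\phi_-(\la,n)=T_-(\la)\phi_+(\la,n)$ with $R_-(\la)=T_-(\la)/\overline{T_-(\la)}$; multiplying by $\overline{T_-}$ and taking real parts identifies the jump of $\phi_-/W=g_-T_-\phi_-$ across $\si_-^{(1)}$ with $g_-(\lau)\,|T_-(\lau)|^2\,\phi_+(\la,n)$, so that $\frac{1}{2\pi\I}$ times the $\si_-^{(1)}$-cut contribution equals $\int_{\si_-^{(1),u}}|T_-(\la)|^2\phi_+(\la,n)\psi^+(\la,m)\,d\omega_-$. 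For (iii): from the $\la\to\infty$ behaviour of $\phi_\pm(\la,n)$, $T_\pm$, $g_\pm$ (using $T_+(\infty)=T_-(\infty)>0$, Lemma~\ref{Jlem2.3}, ${\bf II}$, ${\bf (c)}$, the normalization $\psi^\pm(\la,0)=1$, and \eqref{J0.13}) one finds $h(\la)=O(\la^{-1})$ with the coefficient of $\la^{-1}$ such that the circle at infinity contributes exactly $\delta(n,m)/K_+(n,n)$. Collecting (i)--(iii) with the signs dictated by \eqref{1.141} and by the normalizations above yields \eqref{Jaux}, hence the theorem.

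The main obstacle is the rigorous justification of this contour deformation when $q=1$. In that case the Jost solutions --- hence $h$ --- are only continuous, not differentiable, at the band edges $\pa\si_\pm$, so one must check, by means of the behaviour of $\hat W^{-1}$ recorded in \eqref{Jestim}, \eqref{Jestim2} (equivalently, by controlling the possible virtual levels in $\Sigma_v$), that all the integrals appearing above, and in particular those over $\si_-^{(1)}$, converge up to $\pa\si$, that the deformation produces no spurious contribution concentrated at $\pa\si$, and that every term in \eqref{Jaux} is finite. The remaining care goes into the asymptotic computation in step (iii), pinning down the constant there as $\delta(n,m)/K_+(n,n)$; everything else is a routine bookkeeping of the representations and symmetry relations collected above. \hb
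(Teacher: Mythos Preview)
The paper does not give a proof of this theorem; it is stated with a citation to \cite{emtstp2} and left unproved here. Your sketch reproduces the standard contour--deformation derivation used in \cite{emtstp2} (and, in the non-steplike case, in \cite{emtqps}): start from the scattering relation \eqref{J6.16}, apply the spectral transform \eqref{J1.14} to isolate $K_+(n,m)$, and then evaluate the remaining term $\oint_{\si_+}T_+\phi_-\psi^+\,d\omega_+$ by rewriting it as $\frac{1}{2\pi\I}\oint_{\si_+}\phi_-\psi^+\,W^{-1}\,d\la$ and deforming to a large circle, collecting the eigenvalue residues, the $\si_-^{(1)}$-cut contribution, and the residue at infinity. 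The identification of the jump of $\phi_-/W$ across $\si_-^{(1)}$ with $g_-|T_-|^2\phi_+$ via ${\bf I}$, ${\bf (b)}$ is correct, as is the residue bookkeeping via \eqref{J2.11} and \eqref{Jnorming}.

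Two points deserve a cleaner statement. First, the sign and constant in step~(iii) come from the relation $T_+(\infty)^{-1}=K_+(n,n)K_-(n,n)\prod_{j}\frac{a^\pm(j)}{a(j)}$-type identities (equivalently, from \eqref{J0.13} and the known large-$\la$ asymptotics of $\psi^\pm$, $g_\pm$); in a full proof this has to be written out, not just asserted. Second, your remark about the $q=1$ obstacle is well placed but slightly misaimed: the integrability near $\pa\si$ needed for the contour deformation follows already from \eqref{Jestim}, \eqref{Jestim2}, and the derivation of the Marchenko equation itself goes through for $q=1$ without difficulty---the delicate $q=1$ issues in this paper concern the \emph{estimates} on $F_\pm$ (Lemma~\ref{Jlem4.1}) and the continuity of $R_\pm$ at virtual levels, not the validity of \eqref{Jglm1 q}.
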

We abbreviate
\beq\label{Jkappa}
\kappa_\pm(n,m)=\frac{K_\pm(n,m)}{K_\pm(n,n)},\quad \pm m>\pm n,
\eeq
and rewrite the Marchenko equations \eqref{Jglm1 q} by
 \begin{align}\label{Jmaro}
& \kappa_\pm(n,m) +F_\pm(n,m) +
 \sum_{\ell=n\pm 1}^{\pm\infty} \kappa_\pm(n,\ell)F_\pm(\ell,m)=0,\quad
 \pm m>\pm n, \\ \label{Jmarn}
& 1+F_\pm(n,n)+\sum_{\ell=n\pm 1}^{\pm\infty}
\kappa_\pm(n,\ell)F_\pm(\ell,n)=\frac{1}{K_\pm(n,n)^2}.
\end{align}

The last remaining property to complete the list of necessary and
sufficient conditions on the scattering data depends on $q$.
\begin{lemma}\label{Jlem4.1}
Let $H\in\mathcal B_q(H^+,H^-)$ for a fixed  $q=1,2,\dots$. Then

${\bf IV_q}.$ The functions $F_\pm(n,m)$ defined by \eqref{kerM} satisfy the following conditions.
\begin{enumerate}[\bf (i)]
\item There exist functions $p_\pm(n)\geq 0$, $C_\pm(n)>0$ such that
$|n|^q p_\pm(n) \in \ell^1(\Z_\pm)$, $C_\pm(n)$
do not increase as $n\to\pm\infty$, and such that
\beq \label{Jfourest}
|F_{\pm}(n, m)|\leq C_\pm(n)\sum_{j=n+m}^{\pm \infty} p_\pm(j).
\eeq
\item The following estimates hold
\begin{align}\label{Jfourest1}
& \sum_{n = n_0}^{\pm \infty}
|n|^{\alpha} \big| F_{\pm}(n,n) -
 F_{\pm}(n \pm 1, n \pm 1)\big| < \infty,    \\ \label{Jdif1}
& \sum_{n = n_0}^{\pm \infty}|n|^{\alpha} \big|a^\pm(n) F_{\pm}(n,n+1)
- a^\pm(n-1)  F_{\pm}(n - 1, n)\big| < \infty,
\end{align}
where $\alpha=q$ as $q=2,3,\dots$ and $\alpha=0$ as $q=1$.
\end{enumerate}
\end{lemma}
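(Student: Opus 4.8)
The plan is to derive the estimates in $\mathbf{IV_q}$ directly from the defining formula \eqref{kerM} for $F_\pm$, using the orthogonality/completeness relations \eqref{J1.14}, the analytic structure of the Weyl solutions recalled above, and the known behaviour of the scattering data from Lemma~\ref{Jlem2.3}. I will treat the right kernel $F_+$ and indicate that $F_-$ is symmetric. The key observation is that the three terms of \eqref{kerM} are of different nature: the eigenvalue sum $\sum_k \ga_{+,k}\tilde\psi^+(\la_k,n)\tilde\psi^+(\la_k,m)$ decays exponentially in $n+m$ because each $\tilde\psi^+(\la_k,\cdot)$ is the entry of an $\ell^2(\Z_+)$ Weyl solution at an energy outside the spectrum, so it is harmless for all estimates; the background integral $\int_{\si_\mp^{(1),u}}|T_\mp|^2\,\psi^+\psi^+\,d\omega_\mp$ is a genuine contour integral of a smooth (in the interior) bounded integrand against a finite measure and, after an integration-by-parts / stationary-phase-type manipulation in the local variable $\sqrt{\la-E}$ near each edge $E\in\pa\si_\mp^{(1)}$, again yields bounds with the required decay in $n+m$; and the main term $\oint_{\si_+}R_+(\la)\psi^+(\la,n)\psi^+(\la,m)\,d\omega_+$ is where the moment $q$ enters.

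\smallskip
For part (i), I would factor out the pole/branch factors by writing $\psi^+=\delta_+^{-1}\tilde\psi^+$ and $d\omega_+ = \frac{1}{2\pi\I}g_+\,d\la$, reducing everything to an oscillatory integral over $\si_+$ whose oscillation is governed by the Baker--Akhiezer phase. The function $C_+(n)$ absorbing the products $\prod a(j)/a^+(j)$ appears exactly as in Lemma~\ref{Jprj}, and the quantity $p_+(j)$ is built from the Fourier-type coefficients of $R_+$ on $\si_+^{\mathrm{u,l}}$ together with the perturbation tail $\sum_{k\ge j}(|a(k)-a^+(k)|+|b(k)-b^+(k)|)$; the requirement $|n|^q p_+(n)\in\ell^1(\Z_+)$ then follows from property $\mathbf{IV_q}$-type input on $R_+$, i.e.\ from the decay of its ``Fourier coefficients'' which in turn comes from the differentiability of the Jost solutions in $\sqrt{\la-E}$ for $q\ge2$ (and its failure for $q=1$, which is precisely why $\alpha=0$ there). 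Concretely, I would expand $\phi_+$ via \eqref{J3.16}, insert into the scattering relation \eqref{J6.16} to express $R_+$ through the transformation kernel and the Wronskian $W$, and then use \eqref{J0.13}–\eqref{J0.62} plus the estimate \eqref{chto} to transfer the decay of $K_+$ into decay of $F_+$.

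\smallskip
For part (ii), the telescoped differences $F_+(n,n)-F_+(n+1,n+1)$ and $a^+(n)F_+(n,n+1)-a^+(n-1)F_+(n-1,n)$ are designed so that, upon substituting \eqref{kerM}, the leading oscillatory contribution cancels and what remains is controlled by the first difference of $\psi^+(\la,n)\psi^+(\la,n)$ in $n$, paired against $R_+$, plus the analogous background and discrete pieces. Using the recursion \eqref{J0.12} for $\psi^+$ one rewrites these first differences as new oscillatory integrals carrying one extra factor that is summable against $|n|^\alpha$. The cleanest route is to connect these differences to the already-established asymptotics of the coefficients themselves via the inversion formulas \eqref{J0.13}: since $a(n)\to a^+(n)$, $b(n)\to b^+(n)$ with the $q$'th moment finite, and since those coefficients are expressed through ratios of the $K_+(n,m)$, one can read off \eqref{Jfourest1}–\eqref{Jdif1} from \eqref{Jmaro}–\eqref{Jmarn} evaluated on and just off the diagonal. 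The main obstacle, and the reason this lemma is delicate, is the edge analysis at $\pa\si_+$ in the case $q=1$: there the Jost solutions are merely continuous (not $C^1$) in the local coordinate $\sqrt{\la-E}$, so the integration-by-parts that gains an extra power of decay for $q\ge2$ is unavailable, and one must instead split off the (at most $(\la-E)^{-1/2}$-singular) behaviour of $\hat W^{-1}$ from \eqref{Jestim}, handle the resonant contribution with the explicit values of $R_\pm$ at the edges from Lemma~\ref{Jlem2.3}~$\mathbf{III}$, and verify that the remaining regular part still decays fast enough to give $\alpha=0$ summability; it is exactly at this point that the proof of the corresponding lemma in \cite{emtqps} went wrong, and getting the bookkeeping of these edge singularities right — keeping track of which $E$ lie in $\hat M_\pm$, which are common edges $\pa\si_+\cap\pa\si_-$, and where $\hat W$ may vanish — is the crux of the argument.
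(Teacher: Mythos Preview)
Your plan is not so much wrong as it is orthogonal to what the paper actually does, and in its present form it has a circularity that would need to be resolved before it could be made rigorous.

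The paper's proof never touches the integral formula \eqref{kerM} at all. Instead it reads the Marchenko equations \eqref{Jmaro}--\eqref{Jmarn} as equations \emph{for} $F_\pm$ in terms of $\kappa_\pm$, and transfers the already-established transformation-operator bound \eqref{chto} (rewritten as $|\kappa(n,m)|<\sigma(n+m)$ with $\sigma$ built from the perturbation tail) directly to $F_\pm$. Concretely, for part (i) one sets $F^{(0)}(n,m)=-\kappa(n,m)$, iterates $F^{(s)}(n,m)=-\sum_\ell \kappa(n,\ell)F^{(s-1)}(\ell,m)$, and shows by induction that $|F^{(s)}(n,m)|\le \sigma(n+m)\sigma_1^s(2n+1)/s!$; summing gives \eqref{Jfourest} with $p_+(\ell)=|a(\ell)-a^+(\ell)|+|b(\ell)-b^+(\ell)|$. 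For part (ii) one uses \eqref{J0.13} to write $b(n)-b^+(n)=a^+(n)\kappa_+(n,n+1)-a^+(n-1)\kappa_+(n-1,n)$, then subtracts two instances of \eqref{Jmaro} to express the telescoped $F$-difference through the known $|b-b^+|$ plus a double sum of $\sigma$'s that is finite for $\alpha=q$ when $q\ge 2$ and for $\alpha=0$ when $q=1$. No oscillatory integrals, no edge analysis, no local coordinates $\sqrt{\la-E}$ anywhere.

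The specific problem with your outline is in part (i): you write that $|n|^q p_+(n)\in\ell^1$ ``follows from property $\mathbf{IV_q}$-type input on $R_+$, i.e.\ from the decay of its `Fourier coefficients' which in turn comes from the differentiability of the Jost solutions in $\sqrt{\la-E}$ for $q\ge 2$.'' But the lemma \emph{is} $\mathbf{IV_q}$; you cannot invoke it as input. What you would actually need is an independent statement quantifying the decay of the generalized Fourier coefficients of $R_+$ in terms of the moment $q$ of the perturbation, and proving that from scratch via the scattering relation \eqref{J6.16} is essentially as much work as the whole lemma. The paper bypasses this entirely by never asking anything of $R_+$ beyond what is encoded in the Marchenko equation itself. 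Similarly, the edge bookkeeping you flag as ``the crux'' --- the interaction of $\hat M_\pm$, common edges, and possible zeros of $\hat W$ --- simply does not arise in the paper's argument; the loss from $\alpha=q$ to $\alpha=0$ at $q=1$ comes from the elementary fact that $n\sigma(n)\notin\ell^1(\Z_+)$ in the double-sum estimate, not from any spectral-edge subtlety.
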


\begin{proof} {\bf (i)} The considerations for the left and right half axes are analogous,
so we prove \eqref{Jfourest} for the "$+$" case and drop the index.
For $m>n$, we set in \eqref{Jmaro}
$\kappa:=\kappa_+$, $F:=F_+$,
and let
$\hat a:=a^+$, $\hat b:=b^+$, and $C(n)=C_+(n)$,
where $C_+(n)\geq 1$
is the function from  estimate \eqref{chto}. Then
\beq\label{Jdef1J}
|\kappa(n,m)|< \sigma(n+m),
\eeq
where
\beq\label{Jest1J}
\sigma(n):=C(n)
\sum_{\ell=\left[\frac{n}{2}\right]}^\infty\big(|a(\ell) - \hat a(\ell)|+ |b(\ell)
- \hat b(\ell)|\big).
\eeq
Define $\sigma_1(n):=\sum_{\ell=n}^\infty
\sigma(\ell)$, then
$$
n^{q-1}\sigma(n)\in\ell^1(\mathbb Z_+ ),\quad q=1,2,\dots, \quad
n^{q-2}\sigma_1(n)\in\ell^1(\mathbb Z_+ ),\quad q=2,3,\dots.
$$
Moreover,
\beq\label{JneqJ}
\sigma_1(n)\geq\sigma_1(\ell), \quad
\sigma(n)\geq\sigma(\ell), \quad n<\ell,
\eeq
and since
$$\frac{\sigma_1^s(n)-\sigma_1^s(n+1)}{s!}=(\sigma_1(n)
-\sigma_1(n+1))\frac{\sigma_1^{s-1}(n)+\dots+\sigma_1^{s-1}(n+1)}{s!},$$
then
\beq\label{JsigmaJ}
\frac{\sigma_1^s(n)-\sigma_1^s(n+1)}{s!}\geq(\sigma_1(n)
-\sigma_1(n+1))\frac{\sigma_1^{s-1}(n+1)}{(s-1)!}.
\eeq

To obtain the required estimate, we apply the method of successive approximations to \eqref{Jmaro}. Let
$$
F^{(0)}(n,m)=-\kappa(n,m),\quad F^{(s)}(n,m)=
-\sum_{\ell=n+1}^\infty \kappa(n,\ell)F^{(s-1)}(\ell,m).
$$
We prove by induction that
\beq\label{Jest3J}
|F^{(s)}(n,m)|\leq \sigma(n+m)\frac{\sigma_1^{s}(2n+1)}{s!}.
\eeq
By \eqref{Jdef1J} and \eqref{Jest1J}, this estimate is true for $s=0$.
For $s\geq 1$, take into account  \eqref{JneqJ} and \eqref{JsigmaJ}, then
\begin{align*}
|F^{(s)}(n,m)|&\leq \sum_{\ell=n+1}^\infty \sigma(n+\ell)\sigma(\ell+m)
\frac{\sigma_1^{s-1}(2\ell+1)}{(s-1)!}\\
&\leq \sigma(n+m) \sum_{\ell=n+1}^\infty
\sigma(n+\ell)\frac{\sigma_1^{s-1}(n+\ell+1)}{(s-1)!}\\
&=\sigma(n+m) \sum_{\ell=n+1}^\infty
(\sigma_1(n+\ell)-\sigma_1(n+\ell+1))\frac{\sigma_1^{s-1}(n+\ell+1)}{(s-1)!}\\
&\leq \sigma(n+m)
\sum_{\ell=n+1}^\infty\frac{\sigma_1^s(n+\ell)-\sigma_1^s(n+\ell+1)}{s!}=
\sigma(n+m)\frac{\sigma_1^{s}(2n+1)}{s!}.
\end{align*}
Inequality
\eqref{Jest3J} implies
\beq\label{Jfnm}
|F(n,m)|\leq \sigma(n+m)\exp\{\sigma_1(2n+1)\}\leq C_1(n)\sigma(n+m),\quad m>n,
\eeq
where $C_1(n)$ is of the same type as $C_+(n)$ in \eqref{chto}.

For $n=m$, we use the symmetry of the kernel $F(n,m)$ and
\eqref{J0.13}, \eqref{Jdef1J}, \eqref{Jest3J}, and \eqref{Jmarn}.
Then
\begin{align*}
|F(n,n)|&\leq
\bigg|\prod_{\ell=n}^\infty\frac{a(\ell)^2}{\hat a(\ell)^2} - 1\bigg| +
\sum_{\ell=n+1}^\infty \sigma^2(n+\ell) \\
&\leq C(e^{C\sigma(2n)} -1) + \sigma(2n+1)\sigma_1(2n+1),
\end{align*}
that is,
\beq\label{JdiagJ}
|F(n,n)|\leq C_2(n) \sigma(2n),
\eeq
where $C_2(n)$ again is a function similar to $C_+(n)$ in \eqref{chto},
since $\sigma_1(n)$ increases for $n\rightarrow - \infty$.
This yields \eqref{Jfourest} if we define
$p_+(\ell)=|a(\ell) - \hat a(\ell)|+ |b(\ell) - \hat b(\ell)|$ and $C_+(n)=\max\{C_1(n), C_2(n)\}$.

{\bf (ii)} Let us first show that \eqref{Jdif1} are satisfied for
$q=2,3,\dots$. By \eqref{J0.13} we have
\begin{align}
 \label{Jbplus}b(n)-b^+(n)&=a^+(n)\kappa_+(n,n+1)
  - a^+(n-1)\kappa_+(n-1,n),\\ \label{jminus}
b(n)-b^-(n)&=a^-(n-1)\kappa_-(n,n-1) -
a^-(n)\kappa_-(n+1,n).
\end{align}
Consider the "$+$" case of \eqref{Jmaro}, multiply it at $(n,n+1)$ by $n^q a^+(n)$
and at $(n-1,n)$ by $n^q a^+(n-1)$, and subtract these two equations from each other.
Then using \eqref{Jbplus}, \eqref{Jdef1J}, \eqref{Jfnm}, and monotonicity of
$\si(n)$ we obtain
\begin{align} \nn
&\sum_{n = n_0}^{+ \infty}|n|^q \big|a^+(n) F_{+}(n,n+1) - a^+(n-1) F_{+}(n - 1, n)\big| \\ \nn
& \leq \sum_{n = n_0}^{+ \infty}|n|^q \left|b(n)-b^+(n)\right| +
2C_1(n_0)\Big(\max_{j\in\Z}a^+(j)\Big)\sum_{n = n_0}^{+ \infty}
\sum_{\ell=n}^{+\infty} n^q\si(n-1+\ell)\si(n+\ell)\\  \nn
&\leq \tilde C(n_0)\bigg(1 + \sum_{n = n_0}^{+ \infty}n\si(2n) \sum_{\ell=n_0}^{+\infty}
(n+\ell)^{q-1}\si(n+\ell)\bigg)<\infty. \\ \label{Jdiff}
\end{align}
Note that for $q=1$ the last inequality in \eqref{Jdiff} does not hold, since
$n\sigma(n)\notin\ell^1(\mathbb Z_+)$. It is for this reason that we multiply by
$n^0$ in ${\bf (ii)}$.
The "$-$" case of \eqref{Jdif1} is obtained analogously for $q=2,3,\dots$,
using \eqref{jminus}, \eqref{Jdef1J}, \eqref{Jfnm}, and the symmetry
$F_-(n,m)=F_-(m,n)$.
To prove \eqref{Jfourest1}, we use \eqref{J0.13} and \eqref{Jhypo} to obtain
\begin{align}\nn
\left|\frac{1}{K_+(n,n)^2} -
\frac{1}{K_+(n+1, n+1)^2}\right|&\leq\frac{1}{a_+(n)^2}
\prod_{j=n+1}^{+\infty}\frac{a(j)^2}{a^+(j)^2}|a^+(n)^2 - a(n)^2|\\ \nn
&\leq |a(n) - a^+(n)|\,e^{2\sigma_1(n)} \sup_{j\in\Z}\frac{|a(j) + a^+(j)|}{a^+(j)^2}.
\end{align}
Using this estimate and \eqref{Jmarn} we get \eqref{Jfourest1} by the same reasoning as
in \eqref{Jdiff}.
\end{proof}

\begin{theorem}\label{Jtheor1}
Let $H\in\mathcal B_q(H^+,H^-)$ for a fixed  $q=1,2,\dots$.
Then the scattering data $\mathcal S$ of $H$ given by \eqref{J4.6}
satisfy properties ${\bf I}$--${\bf III}$ of Lemma~\ref{Jlem2.3}.
The functions $F_\pm(n,m)$ defined by (\ref{kerM}) satisfy condition
${\bf IV_q}$ of Lemma~\ref{Jlem4.1}.
\end{theorem}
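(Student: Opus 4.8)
The plan is to read Theorem~\ref{Jtheor1} as the bookkeeping statement that collects Steps~3--5 into a single list, so its proof should amount to invoking the two lemmas already in place together with the discussion preceding them. Properties~$\mathbf{I}$--$\mathbf{III}$ are verbatim the conclusions of Lemma~\ref{Jlem2.3}, which is stated for every fixed $q=1,2,\dots$ and hence applies to our $H\in\mathcal B_q(H^+,H^-)$ without change. I would point out explicitly that the derivation of these properties --- the symmetry relations $\mathbf{I}$, the unitarity-type identities on $\si^{(2)}$, the meromorphic continuation of $T_\pm$ via \eqref{J2.18}, the structure of $\tilde W$ and $\hat W$ at the eigenvalues and at the virtual levels $\Sigma_v$ together with \eqref{J2.11}, and the continuity and boundary values of $R_\pm$ in $\mathbf{III}$ --- uses only the representation \eqref{J3.16} of the Jost solutions, the Wronskian identity \eqref{J0.62}, the completeness relation \eqref{J1.14} of the background, and finiteness of $\si_d$ from \cite{tosc}; none of these ingredients is touched by the error in the kernel estimate mentioned in the introduction, so this part is a genuine citation of \cite{emtstp2}. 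In the same breath I would record that $\mathcal S$ has the structure \eqref{J4.6}: $R_\pm,T_\pm$ live on $\sipmul$ by the scattering relations \eqref{J6.16}, the $p$ eigenvalues lie in $\R\setminus\si$, and each $\ga_{\pm,k}>0$ is finite by \eqref{Jnorming} because $\tilde\phi_\pm(\la_k,\cdot)\in\ell^2(\Z)$.

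The remaining assertion, that the kernels $F_\pm$ defined by \eqref{kerM} obey $\mathbf{IV_q}$, is exactly Lemma~\ref{Jlem4.1}, whose proof has been carried out above: estimate~\eqref{Jfourest} follows from the triangular bound \eqref{chto} on $K_\pm$ and the successive-approximation analysis of the Marchenko equation \eqref{Jmaro}, while \eqref{Jfourest1} and \eqref{Jdif1} come from the coefficient formulas \eqref{J0.13}, \eqref{Jbplus}, \eqref{jminus} combined with the weighted estimate \eqref{Jdiff}. So at the level of Theorem~\ref{Jtheor1} there is nothing left to do but assemble Lemmas~\ref{Jlem2.3} and~\ref{Jlem4.1}.

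The point I would stress is therefore where the real work and the main obstacle actually sit, namely inside Lemma~\ref{Jlem4.1} and in the choice of the exponent $\alpha$. For $q\ge 2$ one may keep the full weight $|n|^q$ in \eqref{Jdiff} because then $n\si(n)\in\ell^1(\Z_\pm)$; for $q=1$ this fails, the weighted series in \eqref{Jdiff} diverges, and one is forced to take $\alpha=0$ in \eqref{Jfourest1}--\eqref{Jdif1}. This loss --- the gap between the first moment one puts in and the zeroth moment that provably survives in the scattering data --- is precisely the phenomenon the introduction warns about, and honestly identifying it is the substance of the correction; Theorem~\ref{Jtheor1} itself is then just the clean restatement of what those two lemmas deliver.
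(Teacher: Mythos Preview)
Your reading is exactly right: in the paper Theorem~\ref{Jtheor1} is stated without any separate proof, because it is precisely the concatenation of Lemma~\ref{Jlem2.3} (properties $\mathbf{I}$--$\mathbf{III}$, taken over from \cite{emtstp2}) and Lemma~\ref{Jlem4.1} (property $\mathbf{IV_q}$, proved immediately above). Your added remarks on where the work sits and on the $q=1$ versus $q\ge2$ weight loss are accurate and match the paper's own commentary.
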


It turns out, that conditions ${\bf I}$--${\bf IV_q}$ are necessary and sufficient for solving
the direct/inverse problem in the class $\mathcal B_q(H^+,H^-)$ with $q=2,3,\dots$.
For $q=1$, they are sufficient to solve the inverse scattering problem in
$\mathcal B_0(H^+, H^-)$. In the next section we will prove the sufficiency of these
conditions and describe the solution algorithm.

\section{Inverse scattering problem}
\label{secINV}

{\it Step 6}.
Let $H^\pm$ be two finite-gap Jacobi operators associated with $a^\pm(n)$, $b^\pm(n)$.
Correspondingly, $\si_+$, $\si_-$, $\psi^\pm$, and $\tilde\psi^\pm$ are given.
Let $\mathcal{S}$ be a set of the form \eqref{J4.6}, which satisfies conditions
${\bf I}$--${\bf III}$, and introduce $F_\pm(n,m)$ by (\ref{kerM}).
It follows from ${\bf I}$--${\bf III}$ that $F_\pm(n,m)$ are well defined.
We now assume that $F_\pm(n,m)$ satisfy property ${\bf IV_q}$, ${\bf (i)}$ for a fixed
$q=1,2,\dots$ and show that the Marchenko equations (\ref{Jglm1 q}) are uniquely
solvable with respect to $K_\pm(n,m)$.

\begin{lemma}\label{Jlem5.4}
Under condition ${\bf IV_q}$, ${\bf (i)}$ for a fixed $q=1,2,\dots$, equations (\ref{Jmaro})
have unique solutions $\kappa_\pm(n,\cdot)\in \ell^1(n\pm 1,\pm\infty)$ satisfying
\beq \label{J5.100}
|\kappa_\pm(n,m)|\leq \hat
C_\pm(n)\sum_{j=n+m}^{\pm\infty} p_\pm(j), \quad\pm m>\pm n. \eeq
Here $p_\pm(n)\geq 0$ are the same functions as in ${\bf IV_q}$, ${\bf (i)}$, and $\hat C_\pm(n)>0$
are bounded as $n\to\pm\infty$.
\end{lemma}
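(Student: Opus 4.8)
The plan is to treat the Marchenko equation \eqref{Jmaro} as a fixed-point equation in a weighted $\ell^1$-space and run the method of successive approximations, exactly as in the proof of Lemma~\ref{Jlem4.1}~\textbf{(i)}, but now \emph{reversing the roles}: there $\kappa_\pm$ was known (via \eqref{chto}) and the decay of $F_\pm$ was deduced; here $F_\pm$ is known (it satisfies \eqref{Jfourest} with the given $p_\pm$, $C_\pm$) and the decay of $\kappa_\pm$ is to be produced. Work on the ``$+$'' side and drop indices, writing $F:=F_+$, $\kappa:=\kappa_+$, and $\rho(n):=\sum_{j=n}^\infty p_+(j)$, so that \eqref{Jfourest} reads $|F(n,m)|\le C(n)\rho(n+m)$ with $C(n)$ nonincreasing and $n^q p_+(n)\in\ell^1(\Z_+)$; in particular $\rho$ is nonincreasing and $n^{q-1}\rho(n)\in\ell^1(\Z_+)$ when $q\ge1$ (and $\rho\in\ell^1$ when $q\ge1$ as well, after one more summation if $q=1$, or directly). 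Set $\rho_1(n):=\sum_{j=n}^\infty \rho(j)$.

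First I would fix $n$ and view \eqref{Jmaro} as $(\mathbf 1+\mathcal F_n)\kappa(n,\cdot)=-F(n,\cdot)$ on $\ell^1(n+1,\infty)$, where $(\mathcal F_n u)(m)=\sum_{\ell=n+1}^\infty u(\ell)F(\ell,m)$. Define the iterates $\kappa^{(0)}(n,m)=-F(n,m)$ and $\kappa^{(s)}(n,m)=-\sum_{\ell=n+1}^\infty \kappa^{(s-1)}(n,\ell)F(\ell,m)$, and prove by induction the bound
\begin{equation*}
|\kappa^{(s)}(n,m)|\le C(n)^{s+1}\,\rho(n+m)\,\frac{\rho_1(2n+1)^s}{s!},
\end{equation*}
using monotonicity of $\rho$ and $\rho_1$ together with the telescoping inequality
$\rho_1(k)^s-\rho_1(k+1)^s\ge s\,(\rho_1(k)-\rho_1(k+1))\,\rho_1(k+1)^{s-1}$, i.e.\ $\rho_1(k)-\rho_1(k+1)=\rho(k)$, exactly as in \eqref{JsigmaJ}. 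The induction step reproduces the chain of inequalities in the display after \eqref{Jest3J} verbatim, with $\sigma\rightsquigarrow\rho$, $\sigma_1\rightsquigarrow\rho_1$, and an extra factor $C(n)$ per step (here one uses $C(\ell)\le C(n)$ for $\ell\ge n$). Summing the geometric-type series gives, for $n$ large enough that $C(n)\rho_1(2n+1)<1$ — say $n\ge n_1$ —
\begin{equation*}
|\kappa(n,m)|\le \sum_{s\ge0}|\kappa^{(s)}(n,m)|\le C(n)\,e^{C(n)\rho_1(2n+1)}\,\rho(n+m)=:\hat C(n)\rho(n+m),
\end{equation*}
which is \eqref{J5.100} with $\hat C(n)=C(n)e^{C(n)\rho_1(2n+1)}$ bounded as $n\to+\infty$ (indeed $\hat C(n)\to$ the limit of $C(n)$). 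Convergence of $\sum_s\kappa^{(s)}$ in $\ell^1(n+1,\infty)$ — finiteness of $\sum_m\rho(n+m)=\rho_1(2n+1)<\infty$ — shows the Neumann series converges, so $\mathbf1+\mathcal F_n$ is invertible on $\ell^1(n+1,\infty)$ and the solution $\kappa(n,\cdot)$ is unique; uniqueness for \emph{every} $n$ (not just $n\ge n_1$) then follows because \eqref{Jmaro} for smaller $n$ expresses $\kappa(n,\cdot)$ through finitely many already-determined $\kappa(\ell,\cdot)$, $\ell\ge n+1$ — more precisely, one runs a descending induction on $n$, at each step inverting $\mathbf1+\mathcal F_n$ on $\ell^1(n+1,\infty)$, which is always possible since $\mathcal F_n$ is a fixed compact (even trace-class-type) operator whose invertibility is equivalent to triviality of its kernel, and a nontrivial kernel element would, by the same successive-approximation estimate applied to the homogeneous equation for large $n$, have to vanish. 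The ``$-$'' side is identical after reflecting $n\mapsto-n$ and using the symmetry of $F_-$.

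The main obstacle is purely bookkeeping: making sure the ``constant'' $C(n)$ — which is only assumed \emph{nonincreasing}, not bounded by $1$ — does not spoil the factorial decay, which is why one needs the threshold $n\ge n_1$ with $C(n)\rho_1(2n+1)<1$ (such $n_1$ exists because $\rho_1(2n+1)\to0$), and then handles the finitely many remaining $n<n_1$ by the descending induction described above. A secondary point to check is that the solution $\kappa(n,\cdot)$ produced by the series genuinely lies in $\ell^1(n+1,\infty)$ and solves \eqref{Jmaro} (not merely the truncated equations), which follows from dominated convergence using the summable majorant $\hat C(n)\rho(n+\cdot)$. No analyticity or spectral input is needed here — only property $\mathbf{IV_q}$, $\mathbf{(i)}$ — so the lemma holds uniformly for all $q=1,2,\dots$.
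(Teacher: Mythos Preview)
Your induction bound $|\kappa^{(s)}(n,m)|\le C(n)^{s+1}\rho(n+m)\,\rho_1(2n+1)^s/s!$ does \emph{not} follow from the chain of inequalities after \eqref{Jest3J}. In the paper's direct problem the iterate is $F^{(s-1)}(\ell,m)$, whose \emph{first argument is the summation variable} $\ell$, so the induction hypothesis carries a factor $\sigma_1^{s-1}(2\ell+1)$ that telescopes via \eqref{JsigmaJ}. In your reversed iteration $\kappa^{(s-1)}(n,\ell)$ has first argument fixed at $n$; the induction hypothesis gives only the constant factor $\rho_1(2n+1)^{s-1}/(s-1)!$, and
\[
\sum_{\ell=n+1}^\infty \rho(n+\ell)\rho(\ell+m)\le \rho(n+m)\,\rho_1(2n+1)
\]
produces no extra $1/s$. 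The correct bound is geometric, $|\kappa^{(s)}(n,m)|\le C(n)\rho(n+m)[C(n)\rho_1(2n+1)]^s$, which is why your threshold $C(n)\rho_1(2n+1)<1$ is actually necessary and the exponential formula you wrote is unjustified. The equation \eqref{Jmaro} is genuinely Fredholm in $m$ for fixed $n$, not Volterra.

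Consequently your Neumann series gives existence and uniqueness only for $n\ge n_1$. For smaller $n$ your two suggested remedies both fail: the equations \eqref{Jmaro} for different $n$ are completely decoupled (nothing ``expresses $\kappa(n,\cdot)$ through $\kappa(\ell,\cdot)$, $\ell\ge n+1$''), so no descending induction is available; and a nontrivial element of $\ker(I+\mathcal F_n)$ for small $n$ has no reason to be related to the equation at large $n$. The paper establishes triviality of this kernel by a spectral argument: writing the quadratic form $\langle(I+\mathcal F_n^+)f,f\rangle$ via the explicit representation \eqref{kerM} of $F_+$ and Parseval \eqref{J1.14}, one arrives at \eqref{J5.104}, and properties ${\bf I}$\,${\bf (a)}$--${\bf (c)}$ together with analytic continuation of $\hat f$ force $f=0$. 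Your closing remark that ``no analyticity or spectral input is needed here --- only property $\mathbf{IV_q}$, $\mathbf{(i)}$'' is exactly where the argument breaks; condition $\mathbf{IV_q}$\,${\bf(i)}$ alone does not prevent $-1$ from being an eigenvalue of $\mathcal F_n$.
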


\begin{proof} Equations (\ref{Jmaro}) are generated by compact operators, hence for unique solvability
it is sufficient to prove that the homogeneous equations
\beq \label{J5.102}
f(m)+\sum_{\ell=n}^{\pm\infty} F_\pm(\ell,m) f(\ell) =0
\eeq
only have trivial solutions in $\ell^1(n,\pm\infty)$.
The proof is the same for both cases, we will consider the "$+$" case. Suppose that $f(\ell)$,
$\ell > n$, is a nontrivial solution of (\ref{J5.102}) and set $f(\ell)=0$ for $\ell \leq n$.
Since $F_+(\ell,n)\in\R$, we can assume that $f(\ell)\in\R$. Denote by
\beq \label{J5.103}
\widehat f(\la) =\sum_{m \in \mathbb{Z}}
\psi^+(\la,m) f(m)
\eeq
the generalized Fourier transform which is generated by the spectral decomposition (\ref{J1.14}) (cf.\ \cite{T}). Recall that $\widehat f(\la)\in L^1_{loc} (\siu_+\cup\sil_+)$. Multiplying
(\ref{J5.102}) by $f(m)$, summing over $m \in \Z$, and using consecutively \eqref{J1.14}, (\ref{kerM}),
(\ref{J5.103}), and ${\bf I}$, {\bf (a)}, we arrive at
\begin{align}
\begin{split}
& 2 \int_{\si_+^u}|\widehat f(\la)|^2 d\omega_+(\la) + 2 \Re
\int_{\si_+^u}R_+(\la) \widehat f(\la)^2 d\omega_+(\la) \\
\label{J5.104} & \quad +\int_{\si_-^{(1),u}}\widehat f(\la)^2
|T_-(\la)|^2 d\omega_-(\la) + \sum_{k=1}^p \ga_{+,k} \bigg(\sum_{n
\in \mathbb{Z}} \tilde \psi^+(\la_k,n) f(n) \bigg)^2=0.
\end{split}
\end{align}
Since $\widehat f(\la) \in\mathbb{R}$ as $\la\in\si_-^{(1)}$ and
$\tilde \psi^+(\la_k)\in \R$, the last two summands in (\ref{J5.104}) are nonnegative.
The integrands of the first two summands are estimated by
\[
|\widehat f(\la)|^2 + \Re R_+(\la) \widehat f(\la)^2 \geq |\widehat
f(\la)|^2 - |R_+(\la)
 \widehat f(\la)^2|
\geq \big(1-|R_+(\la)|\big)|\widehat f(\la)|^2.
\]
Dropping the last summand in (\ref{J5.104}) and using
\[
\int_{\si_+^{(1),\mathrm{u}}}(1-|R_+(\la)|) |\widehat f(\la)|^2
d\omega_+(\la) = 0,
\]
which follows from ${\bf I}$, ${\bf (b)}$, yields
\beq \label{J5.105}
2\int_{\si^{(2),\mathrm{u}}} (1-|R_+(\la)|)|\widehat f(\la)|^2
d\omega_+(\la) + \int_{\si_-^{(1),\mathrm{u}}} \widehat f(\la)^2
|T_-(\la)|^2 d\omega_-(\la) \leq 0.
\eeq
Since $|R_+(\la)|<1$ and $\omega^\prime_+(\la)>0$ for $\la\in \inte(\si^{(2)})$ and
since $\omega_-^\prime(\la)>0$ for $\la\in\inte(\si_-^{(1)})$, we have
\beq \label{Jbolsh}
\widehat f(\la)=0, \quad
\la\in\si^{(2)}\cup \si_-^{(1)}= \si_-.
\eeq
The function $\widehat f(\la)$ defined by (\ref{J5.103}) is meromorphic on
$\mathbb{C}\setminus\si_+$. But \eqref{Jbolsh} implies that $\widehat f(\la)$ is in fact meromorphic on $\mathbb{C}\setminus\si^{(1)}_+$ and vanishes on $\si_-$. Hence $\widehat f(\la)=0$, therefore
$f(m)=0$ as desired, and \eqref{Jmaro} has a unique solution in $\ell^1(n+1,+\infty)$.

To obtain estimate \eqref{J5.100}, we first introduce the following operator in
$\ell^1(n,+\infty)$,
$$
\left((I+\mathcal F_n^+)f\right)(m):= f(m)+\sum_{\ell=n}^{+\infty} F_+(\ell,m)
f(\ell).
$$
We already proved that the left-hand side of \eqref{J5.104}, which is equal to the scalar product $\left((I+\mathcal F_n^+)f, f\right)$, is nonnegative. Indeed, it is positive. If the spectrum
of multiplicity two is nonempty, that is, if there exists a set where $|R_+|<1$, then
the sum of the first two summands in \eqref{J5.104} is positive and the other two summands are
nonnegative. If $\si^{(2)}=\emptyset$, then necessarily the summand which corresponds to integrating
over the set $\si_-^{(1),u}$ is positive.
Hence, the operator $(I+\mathcal F_n^+)$ is compact and positive, and has a bounded inverse
which we abbreviate by
$$
\|(I+\mathcal F_n^+)^{-1}\|=\hat C^+(n)>0.
$$
Note that $\hat C^+(n)$ is not monotonic, there exists
\beq\label{JJes}
\max_{n>n_0}\hat C^+(n)=C_0^+.
\eeq
Let $C_+(n)$ and $p_+(n)$ be the functions of (the "$+$" case of) estimate \eqref{Jfourest} and
set
\beq\label{Jest1Jp}
\sigma^+(n):=C_+(n)
\sum_{\ell=n}^{+\infty} p^+(\ell),
\quad\sigma_1^+(n):=\sum_{\ell=n}^{+\infty} \sigma^+(\ell).
\eeq
It follows from the Marchenko equation
\eqref{Jmaro} that
$$
\kappa_+(n,n+j)= -(I+\mathcal
F_{n+1}^+)^{-1}F_+(n, n+j), \quad j=1,2,\dots,
$$
that is,
$$
\sum_{j=1}^{+\infty} |\kappa(n,n+j)|\leq \hat
C^+(n)\sum_{j=1}^{+\infty}\sigma^+(2n+j)=\hat
C^+(n)\sigma_1^+(2n+1).
$$
Applying this inequality to \eqref{Jmaro}
and using \eqref{Jfourest}, \eqref{Jest1Jp}, we obtain
\begin{align*}
|\kappa_+(n,m)|& \leq |F_+(n,m)| + \sum_{j=1}^{+\infty}
|\kappa_+(n,n+j)|\sigma^+(n+j+m)\\
&\leq\sigma^+(n+m)(1+\hat C^+(n)\sigma_1^+(2n+1))\leq \hat C_+(n)\sigma^+(n+m),
\end{align*}
which proves \eqref{J5.100}.
\end{proof}

Summarizing, we constructed $\kappa_\pm(n,m)$ for any $n$ and $\pm m>\pm n$.
To complete the reconstruction of the operators $K_\pm(n,m)$, $\pm m\geq \pm n$, it is necessary to
verify that the left hand side of \eqref{Jmarn} is a positive function.
It allows us to define $K_\pm(n,n)$ as a positive value, details of the proof can be found in \cite{emtqps}.

Now we denote the following four sequences by $a_\pm$, $b_\pm$, $n\in\Z$,
\begin{align} \label{J5.1}
\begin{split}
a_+(n) &= \frac{a^+(n)K_+(n+1, n+1)}{K_+(n,n)}, \quad
a_-(n) = a^-(n)\frac{a^-(n)K_-(n, n)}{K_-(n+1,n+1)}, \\
b_+(n) &= b^+(n) + a^+(n)\kappa_+(n,n+1)
- a^+(n-1) \kappa_+(n-1,n), \\
b_-(n) &= b^-(n) + a^-(n-1)\kappa_-(n,n-1) - a^-(n)
\kappa_-(n+1,n),
\end{split}
\end{align}
and investigate their asymptotical behavior for large $|n|$.

\begin{lemma}\label{Jlemdop} Let $F_\pm(n,m)$ satisfy condition
${\bf IV_q}$ for a fixed $q=1,2,\dots$. Then the solutions of the Marchenko equation (\ref{Jglm1 q})
satisfy
\begin{align} \label{Jinvest}
\begin{split}
&\sum_{n = n_0}^{\pm \infty}|n|^\alpha \big| K_{\pm}(n,n) -
K_{\pm}(n \pm 1, n \pm 1)\big| < \infty,    \\
&\sum_{n = n_0}^{+ \infty}|n|^\alpha \big|a^+(n)  \kappa_{+}(n,n+1)
- a^+(n-1)  \kappa_{+}(n - 1, n)\big| < \infty,\\
&\sum_{n = n_0}^{- \infty}|n|^\alpha \big|a^-(n-1) \kappa_{-}(n,n-1)
- a^-(n)  \kappa_{-}(n + 1, n)\big| < \infty,
\end{split}
\end{align}
where $\alpha=q$ for $q=2,3,\dots$ and $\alpha=0$ for $q=1$.
\end{lemma}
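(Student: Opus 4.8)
The plan is to derive the estimates in \eqref{Jinvest} directly from the Marchenko equations in the form \eqref{Jmaro}, \eqref{Jmarn}, using the bound \eqref{J5.100} on $\kappa_\pm$ together with the two extra summability hypotheses \eqref{Jfourest1}, \eqref{Jdif1} on $F_\pm$. As always the two half-axes are symmetric, so I would do the ``$+$'' case and keep the abbreviations $\kappa=\kappa_+$, $F=F_+$, $\hat a=a^+$, writing $\sigma^+$, $\sigma_1^+$ as in \eqref{Jest1Jp}. Note first that for $q=1$, $\alpha=0$, the first line of \eqref{Jinvest} is already contained in the proof of unique solvability (Lemma~\ref{Jlem5.4} gives $\kappa_+(n,\cdot)\in\ell^1$, and from \eqref{Jmarn} and \eqref{J5.100} one reads off that $K_+(n,n)\to$ a positive limit with $\ell^1$ increments); so the real content is the weighted version $\alpha=q$ for $q\ge 2$, and the two difference estimates involving $\hat a$.

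The main step is the second line of \eqref{Jinvest}. Following exactly the device used for \eqref{Jdiff}: take the ``$+$'' Marchenko equation \eqref{Jmaro}, evaluate it at $(n,n+1)$ multiplied by $|n|^\alpha \hat a(n)$ and at $(n-1,n)$ multiplied by $|n|^\alpha \hat a(n-1)$, and subtract. The difference of the two ``free'' terms is $|n|^\alpha\big(\hat a(n)F(n,n+1)-\hat a(n-1)F(n-1,n)\big)$, which is summable by hypothesis \eqref{Jdif1}. The difference of the $\kappa$-terms on the left-hand side is $|n|^\alpha\big(\hat a(n)\kappa(n,n+1)-\hat a(n-1)\kappa(n-1,n)\big)$, which is what we want to bound. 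The remaining terms are the tails $\sum_{\ell\ge n+1}\big(\hat a(n)\kappa(n,\ell)-\hat a(n-1)\kappa(n-1,\ell)\big)F(\ell,n)$ type sums; splitting $\hat a(n)\kappa(n,\ell)-\hat a(n-1)\kappa(n-1,\ell)$ and estimating each factor crudely by \eqref{J5.100} and \eqref{Jfourest}, one is left — just as in \eqref{Jdiff} — with a double sum of the shape $\sum_n |n|^\alpha \sigma^+(2n)\sum_{\ell\ge n_0}(n+\ell)^{\alpha-1}\sigma^+(n+\ell)$ plus lower-order pieces, and this converges precisely because $|n|^{\alpha}\sigma^+(n)$ and $|n|^{\alpha-1}\sigma_1^+(n)$ lie in $\ell^1(\Z_+)$ under ${\bf IV_q}$ when $\alpha=q\ge 2$ (and trivially when $\alpha=0$). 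Hence the second line of \eqref{Jinvest} follows; the third line (the ``$-$'' case) is obtained identically using the symmetry $F_-(n,m)=F_-(m,n)$. For the first line with $\alpha=q$, one multiplies \eqref{Jmarn} by $|n|^q$, takes the difference at $n$ and $n+1$, and controls the right-hand side $|n|^q\big|K_+(n,n)^{-2}-K_+(n+1,n+1)^{-2}\big|$ by the same reasoning, using \eqref{Jfourest1} for the $|n|^q|F(n,n)-F(n+1,n+1)|$ contribution and \eqref{J5.100} for the quadratic $\kappa$-tails; then the already-established positivity and boundedness of $K_+(n,n)$ converts a summability estimate on $K_+(n,n)^{-2}$ into one on $K_+(n,n)$ itself.

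The main obstacle I expect is bookkeeping rather than ideas: one must check that every cross-term produced by the subtraction really is dominated by a convergent weighted double sum, and in particular that the weight $|n|^\alpha$ can be absorbed — the point being that in the double sums the summation variable runs over $n+\ell$, so $|n|^\alpha \le (n+\ell)^\alpha$ and one pays with a factor that \eqref{JsigmaJ}-type monotonicity and the hypothesis $|n|^q p_+(n)\in\ell^1$ can afford, but only because $q\ge 2$ (for $q=1$ this step fails, which is exactly why $\alpha$ is forced down to $0$ there — the same phenomenon flagged after \eqref{Jdiff}). A secondary technical point is that $\hat C_+(n)$, $C_+(n)$ in \eqref{J5.100}, \eqref{Jfourest} are only bounded, not monotone, so one should replace them by $C_0^+=\max_{n>n_0}\hat C^+(n)$ as in \eqref{JJes} before estimating, exactly as was done in the proof of Lemma~\ref{Jlem5.4}.
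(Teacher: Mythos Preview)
Your proposal is correct and follows essentially the same approach as the paper's own proof, which simply says ``It is almost literally the same as for Lemma~\ref{Jlem4.1}, {\bf (ii)}'' and then redefines $\sigma(n)$ with the constants replaced by $\sup_{j\ge n_0}(\hat C_+(j)+C_+(j))$ (exactly the supremum replacement you flagged) before repeating the estimate of type \eqref{Jdiff}. Your identification of why the weight drops to $\alpha=0$ at $q=1$ and your treatment of the diagonal estimate via \eqref{Jmarn} are precisely in the spirit of the paper's argument.
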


\begin{proof} It is almost literally the same as for Lemma \ref{Jlem4.1}, {\bf (ii)}.
For $n\geq n_0$, define
$$
\si(n)=\sup_{j\geq n_0}\big(\hat C_+(j) + C_+(j)\big)
\sum_{\ell=n}^{+\infty}p_+(\ell), \quad
\sigma_{1}(n):=\sum_{\ell=n}^{+\infty} \sigma(\ell),
$$
where $p_+(\ell)$,
$\hat C_+(n)$, and $C_+(n)$ are the functions in \eqref{J5.100} and \eqref{Jfourest}.
In the same way as we derived \eqref{Jdiff} we show
\begin{align*}
&\sum_{n = n_0}^{+ \infty}|n|^\alpha \big|a^+(n)  \kappa_{+}(n,n+1)
- a^+(n-1)  \kappa_{+}(n - 1, n)\big| \\
& \leq\sum_{n = n_0}^{+ \infty}|n|^\alpha \big|a^+(n)  F_{+}(n,n+1)
- a^+(n-1)  F_{+}(n - 1, n)\big|\\
&\quad + 2\sup_{j\in\Z}a^+(j)\times\left\{ \begin{array}{ll}
 \sum_{n = n_0}^{+ \infty}n\si(2n) \sum_{\ell=n_0}^{+\infty}
(n+\ell)^{q-1}\si(n+\ell), & q=2,\dots,\\
\sigma_{1}(2n_0)^2, & q=1.
\end{array}\right.
\end{align*}
The remaining estimates in \eqref{Jinvest} are obtained similarly.
\end{proof}

Note that (\ref{Jinvest}) imply
\beq \label{J5.2}
n^\alpha\big(|a_\pm(n)-a^\pm(n)| + |b_\pm(n) - b^\pm(n)|\big)\in\ell^1(\Z_\pm).
\eeq
Therefore, our sequences $a_+(n)$, $b_+(n)$ (resp.\ $a_-(n)$, $b_-(n)$) have the desired behavior as
$n\to +\infty$ (resp.\ $n\to -\infty$), but nothing can be said about their behavior on the opposite half-axis.

{\it Step 7}. It remains to prove that $a_+(n)\equiv a_-(n)$ and $b_+(n)\equiv
b_-(n)$ under conditions ${\bf I}$--${\bf III}$, which are the same for all fixed $q=1,2,\dots$.
We already utilized conditions ${\bf I}$, ${\bf (a)}$--${\bf (c)}$, ${\bf II}$, ${\bf III}$, ${\bf (a)}$, and
${\bf IV_q}$ in the proofs of Lemmas \ref{Jlem5.4}, \ref{Jlemdop} to obtain
$a_\pm(n)$, $b_\pm(n)$ satisfying (\ref{J5.2}). The remaining conditions
${\bf III}$, {\bf (b)}, and ${\bf I}$, {\bf (d)}, play a key role for proving the uniqueness theorem.
We remark that ${\bf III}$, {\bf (b)}, and \eqref{Jestim}, \eqref{Jestim2} are the analog of the
Marchenko condition in the step-like finite-gap case.

\begin{theorem}\label{Jtheor2}
The functions defined in \eqref{J5.1} coincide, $a_+(n)\equiv a_-(n)=a(n)$, $b_+(n)\equiv
b_-(n)=b(n)$. Moreover, the set ${\mathcal S}$, which satisfies conditions
${\bf I}$--${\bf IV_q}$, is the set of scattering data for
$H\in\mathcal B_\alpha(H^+, H^-)$ associated with the reconstructed coefficients $a(n)$, $b(n)$, where
$\alpha=0$ for $q=1$ and $\alpha=q$ for $q=2,3,\dots$.
\end{theorem}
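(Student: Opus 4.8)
The strategy is classical for inverse scattering via Marchenko's method: we have already reconstructed, from a data set $\mathcal S$ obeying conditions $\mathbf I$--$\mathbf{IV_q}$, two pairs of coefficients $\{a_+(n),b_+(n)\}$ and $\{a_-(n),b_-(n)\}$ via the left and right Marchenko equations, together with the corresponding transformation kernels $K_\pm(n,m)$ and the Jost-type solutions $\phi_\pm(\la,n)=\sum_{m=n}^{\pm\infty}K_\pm(n,m)\psi^\pm(\la,m)$. By construction $\phi_\pm$ solves the spectral equation $H_\pm\phi_\pm=\la\phi_\pm$ for the operator $H_\pm$ built from $\{a_\pm(n),b_\pm(n)\}$, and by \eqref{J5.2} these operators are genuinely close to the backgrounds on the \emph{associated} half-axis. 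The first step is to use the Marchenko equations (run backwards, i.e.\ viewed as identities rather than as equations to be solved) to establish the scattering relations \eqref{J6.16} relating $\phi_+$ and $\phi_-$ through the prescribed coefficients $R_\pm,T_\pm$. Concretely, one inserts the series for $\phi_\pm$ into the expression for $F_\mp$, uses the completeness/orthogonality relation \eqref{J1.14} on $\si_\pm$, and the defining formula \eqref{kerM}; property $\mathbf I$, $\mathbf{(a)}$ gives the needed symmetry and $\mathbf I$, $\mathbf{(b)}$--$\mathbf{(d)}$ the algebraic compatibility, so that $T_\mp\phi_\pm - \overline{\phi_\mp} - R_\mp\phi_\mp$ has vanishing generalized Fourier coefficients and hence is zero.

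\textbf{Second step: the Wronskian identity and meromorphic continuation.} From the scattering relations on $\si^{(2)}$ and the single-side relations on $\si_\pm^{(1)}$ one forms the Wronskian $W(\la)=a_\pm(n)(\phi_-(n)\phi_+(n+1)-\phi_-(n+1)\phi_+(n))$ (a priori one must check the two expressions with $a_+$ and with $a_-$ agree — this is where $a_+\equiv a_-$ on overlaps starts to be forced) and shows $1/(T_+g_+)=1/(T_-g_-)=W$, matching \eqref{J2.18}. Here condition $\mathbf{II}$, together with $\mathbf{III}$, $\mathbf{(a)}$ and $\mathbf{(b)}$ and the edge estimates \eqref{Jestim}, \eqref{Jestim2}, is used to continue $\phi_\pm$ (equivalently, $W$) meromorphically across $\si^{(1)}_\mp$ into the full resolvent set, controlling the behaviour at the virtual levels $\Sigma_v$. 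The crucial analytic point — and the one that fails at the level of the conditions listed for $q=1$ unless one drops down to $\mathcal B_0$ — is that $\mathbf{III}$, $\mathbf{(b)}$ (the Marchenko-type condition, $R_\pm(E)=\mp 1$ at joint edges) plus $\mathbf I$, $\mathbf{(d)}$ forces the apparent singularities of the continued $\phi_\mp$ on $\si_\pm^{(1)}$ to cancel, so that $\phi_+$ extended through $\si_-^{(1)}$ and $\phi_-$ extended through $\si_+^{(1)}$ live on the \emph{same} Riemann surface $\mathbb C\setminus\si$ and have only the poles dictated by the divisors and the discrete eigenvalues.

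\textbf{Third step: the uniqueness argument proper.} Once $\phi_+$ is meromorphic on $\mathbb C\setminus\si$, one examines the function $\phi_+(\la,n)/\phi_-(\la,n)$ (or equivalently $W(\phi_+,\phi_-)$ together with $W(\phi_+,\overline{\phi_-})$). Using the scattering relations to compute its boundary values on $\si$, using $\mathbf I$, $\mathbf{(d)}$ to see the reflection contributions cancel, and using the behaviour at $\infty$ from $\mathbf{II}$, $\mathbf{(c)}$ (namely $T_\pm(\infty)>0$, so $\phi_\pm$ have the normalized exponential asymptotics of their backgrounds), one shows this ratio has no jump across the continuous spectrum, no poles (the eigenvalues $\la_k$ are common and the normalizing-constant relation \eqref{J2.11} makes the residues match), and is bounded, hence constant; evaluating at $\infty$ gives the constant $1$. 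Therefore $\phi_+(\la,n)=\phi_-(\la,n)$ for all $\la$ and $n$, and feeding this back into \eqref{J0.13}-type formulas \eqref{J5.1} yields $a_+(n)=a_-(n)=:a(n)$, $b_+(n)=b_-(n)=:b(n)$. Finally, since $\{a_+,b_+\}$ was controlled for $n\to+\infty$ and $\{a_-,b_-\}$ for $n\to-\infty$ by \eqref{J5.2}, the common coefficients satisfy the moment bound on both sides, so $H\in\mathcal B_\alpha(H^+,H^-)$ with $\alpha=q$ for $q\ge2$ and $\alpha=0$ for $q=1$; and by the construction in Steps 1--5 its scattering data is exactly $\mathcal S$. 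The main obstacle is the second step: correctly controlling the meromorphic continuation across the simple spectrum and the square-root singularities at the virtual levels, and verifying that conditions $\mathbf{III}$, $\mathbf{(b)}$ and $\mathbf I$, $\mathbf{(d)}$ really do kill all spurious singularities — this is precisely the point where the $q=1$ finite-gap case is delicate and where the earlier erroneous Lemma~7.3 of \cite{emtqps} entered, forcing the fallback to $\mathcal B_0$.
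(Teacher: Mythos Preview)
The paper itself does not give a proof of this theorem; it simply records that ``This theorem was proven in \cite{emtstp2} by the Marchenko approach.''  So there is no in-paper argument to compare against, but your outline can still be assessed on its own merits, and it contains a real structural error in the third step (which also contaminates the second).

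Your ``uniqueness argument proper'' asserts that the ratio $\phi_+(\la,n)/\phi_-(\la,n)$ is constant in $\la$ and equals $1$, i.e.\ $\phi_+\equiv\phi_-$.  This is false and would contradict the very scattering relations you derived in the first step: $\phi_+$ and $\phi_-$ are the \emph{two} Jost solutions (left- and right-decaying), linearly independent with Wronskian $W(\la)\not\equiv 0$, and with different asymptotics $\phi_\pm\sim\psi^\pm$ as $n\to\pm\infty$.  What must be shown is not $\phi_+=\phi_-$ but that $\phi_+$ and $\phi_-$ satisfy the \emph{same} difference equation, i.e.\ $(a_+,b_+)=(a_-,b_-)$.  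Relatedly, your second step forms the ``Wronskian'' $a_\pm(n)\big(\phi_-(n)\phi_+(n+1)-\phi_-(n+1)\phi_+(n)\big)$ and treats it as $n$-independent; but until you know $H_+=H_-$, $\phi_+$ and $\phi_-$ solve \emph{different} three-term recursions, so this quantity has no reason to be constant in $n$ and the identification with $1/(T_\pm g_\pm)$ is unjustified at this stage.

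The actual Marchenko-type uniqueness argument (as carried out in \cite{emtstp2}) proceeds instead by taking the Jost solution $\phi_+$ of the reconstructed operator $H_+$, extending it to all of $\Z$ via that recursion, and then showing---using conditions $\mathbf I$, $\mathbf{(d)}$, $\mathbf{II}$, and the edge conditions $\mathbf{III}$, $\mathbf{(b)}$, \eqref{Jestim}, \eqref{Jestim2}---that $T_-(\la)\phi_+(\la,n)$ admits on the left half-axis an expansion $\sum_{m\le n}\widetilde K(n,m)\psi^-(\la,m)$ whose kernel $\widetilde K(n,\cdot)$ satisfies the left Marchenko equation \eqref{Jmaro}.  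By the uniqueness in Lemma~\ref{Jlem5.4}, $\widetilde K=K_-$, and then formulas \eqref{J5.1} force $(a_+,b_+)=(a_-,b_-)$.  Your first step (recovering the scattering relations from the Marchenko equations and conditions $\mathbf I$) and your closing paragraph on the moment count are on the right track; the Liouville-style argument you sketch for the middle is aimed at the wrong target.
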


This theorem was proven in \cite{emtstp2} by the Marchenko approach.
In the remaining part of this section we briefly discuss how the direct/inverse scattering problem
can be solved in the class $\mathcal B_1(H^+_{const}, H^-_{const})$,
where $H^\pm_{const}$ are the Jacobi operators with constant coefficients defined in \eqref{Jconstb}.
So far, results are only known for $\mathcal B_2(H^+_{const}, H^-_{const})$, see \cite{vdo}.
For $\mathcal B_1(H^+_{const}, H^-_{const})$, the characteristic properties listed in
Lemma \ref{Jlem2.3} can be simplified. First of all, in this case the background Weyl solutions
have no poles and hence $\hat\delta_\pm(\la)$, $\tilde\delta_\pm(\la)$ are obsolete.
After the change of variables
\beq\label{chanv}
\la - b^\pm=a^\pm(z_\pm + z_\pm^{-1}),
\eeq
condition ${\bf II}$ is given by

\vskip 0.2cm
 \noindent ${\bf II_{const}}$. {\it The functions $T_\pm(\la)$
can be continued analytically to $\mathbb{C} \setminus (\si\cup\si_d)$. They
satisfy $T_+(\infty)=T_-(\infty)>0$ and
$$
a^+(z_+ -z_+^{-1})T_+^{-1}(\la) = a^-(z_- - z_-^{-1})T_-^{-1}(\la) =W(\la).
$$
The function $W(\la)$ is holomorphic in $\mathbb{C}\setminus\si$ and continuous up to the boundary.
Moreover, $\overline{ W(\lau)}= W(\lal)$ for $\la\in\si$ and
$W(\la)\in\mathbb{R}$ for $\la\in\mathbb{R}\setminus \sigma$.
It has simple zeros at the points $\la_k$, where $(W^\prime(\la_k))^{-2} =\ga_{+,k}\ga_{-,k}$.
It can have simple zeros on $\Sigma_v:=\pa\si\cup(\pa\si_+^{(1)}\cap\pa\si_-^{(1)})$, but does
not vanish at other points of $\si$. If $W(E)=0$ for
$E\in\Sigma_v$, then }
\beq \label{Jestim12}
W(\la) = C\sqrt{\la - E}(1+o(1)),\quad C\neq 0, \quad \mbox{for $\la \to E$}.
\eeq

Property \eqref{Jestim12} is new and was never noticed for perturbations with finite first moments.
We will prove it in Section \ref{Contin}. As we see, the behavior of the Wronskian on constant background
can be better controlled as on arbitrary finite-gap backgrounds. In fact, \eqref{Jestim12} shows that
the Wronskian has the same behavior for first moments as for higher moments
(see remarks to Lemma \ref{Jlem2.3}) in this case.

For constant backgrounds property ${\bf III}$ is replaced by

\vskip 0.2cm
\noindent ${\bf III_{const}: (a)}$
{\it The reflection coefficients $R_\pm(\la)$ are continuous functions on $\sipmul$;
${\bf (b)}$ If $E\in\pa\si_+\cap\pa\si_-$ and $W(E)\neq 0$, then $R_\pm(E)=-1$.}

\vskip 0.2cm

The difference between ${\bf III_{const}}$ and ${\bf III}$ arises from the fact that
$R_\pm(\la)$ are continuous at the points of $\Sigma_v$ in the resonance case ($W(E)=0$) too.
For a proof of ${\bf III_{const}}$ see Section \ref{Contin}.

Now the kernels of the Marchenko equations are given by $F_\pm(n,m)=F_\pm(n+m)$, where
\begin{align*}
F_\pm(n)&=\frac{1}{2\pi\I a^\pm}\int_{|z_\pm|=1}R_\pm(\la)z_\pm^{\pm n}(z_\pm -
z_\pm^{-1})^{-1}d\la\\
& \quad +\frac{1}{2\pi\I a^\mp}\int_{\si_\mp^{(1)}}|T_\mp(\la)|^2 z_\pm^{\pm n}(z_\mp -
z_\mp^{-1})^{-1}d\la +\sum_{k=1}^q \gamma_{\pm,k} z_{\pm,k}^{\pm n}.
\end{align*}
The estimates on the transformation operator for constant backgrounds were announced in \cite{gu}
and rigorously proven in \cite[Ch.\ 10]{tjac}. They
allow to derive from the Marchenko equations that
\beq\label{Jcon}
\qquad \sum_{n=1}^{\pm\infty} |n||F_\pm(n\pm 2) -
F_\pm(n)|<\infty.
\eeq
It turns out (cf.\ \cite{tjac}), that these estimates are
sufficient to reconstruct a solution of the inverse scattering problem which
belongs to $\mathcal B_1(H^+_{const}, H^-_{const})$.
In summary,

\begin{theorem}\label{stepconst} Conditions ${\bf I}$, ${\bf II_{const}}$, ${\bf III_{const}}$, and
\eqref{Jcon} for the scattering data \eqref{J4.6} are necessary and sufficient for solving the
direct/inverse scattering problem in the class $\mathcal B_1(H^+_{const}, H^-_{const})$.
\end{theorem}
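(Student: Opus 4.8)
The plan is to prove the two directions of the equivalence separately, reducing each as far as possible to the general finite-gap machinery developed in the preceding sections and to the sharpened spectral-edge information of Section~\ref{Contin}. \emph{Necessity.} Suppose $H\in\mathcal B_1(H^+_{const},H^-_{const})$. As this class sits inside $\mathcal B_1(H^+,H^-)$ with $H^\pm=H^\pm_{const}$, Theorem~\ref{Jtheor1} already yields $\mathbf I$ and the $q=1$ forms of $\mathbf{II}$, $\mathbf{III}$, $\mathbf{IV_1}$. For constant backgrounds the Weyl solutions are $\psi^\pm(\la,n)=z_\pm^{\pm n}$ (with $\la$ as in \eqref{chanv}) and have no poles, so $M_\pm$, $\hat M_\pm$ are empty, $\delta_\pm\equiv\hat\delta_\pm\equiv1$, and $\mathbf{II}$ collapses to $\mathbf{II_{const}}$ except for the edge asymptotics \eqref{Jestim12}; the latter, together with the continuity of $R_\pm$ at $\pa\si_+\cap\pa\si_-$ in the resonant case — i.e.\ $\mathbf{III_{const}}$ — is exactly the content of Section~\ref{Contin}. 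Finally $F_\pm(n,m)=F_\pm(n+m)$, so the even and odd parts of $\mathbf{IV_1}$, $\mathbf{(ii)}$ are statements about $F_\pm(2n\pm2)-F_\pm(2n)$ and $F_\pm(2n\pm1)-F_\pm(2n\mp1)$; upgrading the crude bound \eqref{chto} to the refined transformation-operator estimates for constant steplike backgrounds (announced in \cite{gu}, proved in \cite[Ch.~10]{tjac}) lets one carry the extra weight $|n|$, and combining the two parities gives precisely \eqref{Jcon}.

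\emph{Sufficiency.} Let $\mathcal S$ have the form \eqref{J4.6} and satisfy $\mathbf I$, $\mathbf{II_{const}}$, $\mathbf{III_{const}}$, \eqref{Jcon}. Conditions $\mathbf I$--$\mathbf{III_{const}}$ make the kernels $F_\pm$ (the constant-background case of \eqref{kerM}) well defined, real and symmetric, and summing \eqref{Jcon} gives $|n|\,F_\pm(n)\in\ell^1(\Z_\pm)$; in particular $\mathbf{IV_1}$, $\mathbf{(i)}$ holds. Lemma~\ref{Jlem5.4} — whose proof uses only $\mathbf I$, $\mathbf{(a)}$, $\mathbf{(b)}$ and positivity of the Marchenko operator — then applies essentially verbatim and produces unique $\kappa_\pm(n,m)$, while positivity of the left-hand side of \eqref{Jmarn} defines $K_\pm(n,n)>0$ (cf.\ \cite{emtqps}). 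Setting $a_\pm(n)$, $b_\pm(n)$ as in \eqref{J5.1}, one runs the argument of Lemma~\ref{Jlemdop} with $\alpha=1$ — this is the point where \eqref{Jcon}, and not the general $\mathbf{IV_1}$ (which only yields $\alpha=0$), is indispensable; it is carried out as in \cite[Ch.~10]{tjac} — so that \eqref{J5.2} holds with $\alpha=1$, i.e.\ $a_+,b_+$ have finite first moment at $+\infty$ and $a_-,b_-$ at $-\infty$. It remains to identify $a_+\equiv a_-$, $b_+\equiv b_-$: this is the Step~7 uniqueness argument of Theorem~\ref{Jtheor2}, resting on $\mathbf I$, $\mathbf{(d)}$ together with the behavior of $R_\pm$ and of the Wronskian at the virtual levels $\Sigma_v$. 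Here $\mathbf{III_{const}}$, $\mathbf{(b)}$ and the square-root asymptotics \eqref{Jestim12} are at least as strong as $\mathbf{III}$, $\mathbf{(b)}$ and \eqref{Jestim}, \eqref{Jestim2}, so that argument closes; the common coefficients $a(n),b(n)$ then lie in $\mathcal B_1(H^+_{const},H^-_{const})$, and reading off the Marchenko equations as in Theorem~\ref{Jtheor2} shows $\mathcal S=\mathcal S(H)$.

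The main obstacle is twofold. First, one must sharpen the general $q=1$ estimate $\mathbf{IV_1}$ to the form \eqref{Jcon} — with the weight $|n|$ and the $\pm2$ shift reflecting the two-step constant Jacobi recursion — and then use it to reconstruct an operator in $\mathcal B_1$ rather than only in $\mathcal B_0$, which is precisely the defect that afflicts genuine finite-gap backgrounds but not the constant ones; this relies on the refined transformation-operator bounds of \cite[Ch.~10]{tjac}, which have no counterpart in the general finite-gap theory. Second, the uniqueness step must be run with the constant-background edge data, and this is legitimate only because on constant backgrounds $R_\pm$ stays continuous at the band edges and the Wronskian vanishes there like $\sqrt{\la-E}$, i.e.\ it behaves as in the higher-moment case. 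Supplying these two edge properties — $\mathbf{III_{const}}$ and \eqref{Jestim12} — is the genuinely new analytic input and is deferred to Section~\ref{Contin}; granting them, the theorem follows by specializing the general scheme developed above.
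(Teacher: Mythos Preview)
Your outline is essentially the paper's own: necessity comes from Theorem~\ref{Jtheor1} specialized to constant backgrounds plus the edge analysis of Section~\ref{Contin}, and sufficiency from the Marchenko reconstruction (Lemmas~\ref{Jlem5.4}--\ref{Jlemdop}) run with $\alpha=1$ via the refined estimates of \cite[Ch.~10]{tjac}, followed by the uniqueness argument of Theorem~\ref{Jtheor2}. One small slip: the assertion that ``summing \eqref{Jcon} gives $|n|\,F_\pm(n)\in\ell^1(\Z_\pm)$'' is false (take $F_+(n)\sim 1/(n\log^2 n)$); what \eqref{Jcon} does give, using $F_\pm(k)=-\sum_{j\ge0}\big(F_\pm(k\pm2j\pm2)-F_\pm(k\pm2j)\big)$, is exactly the structure $|F_\pm(k)|\le\sum_{j=k}^{\pm\infty}p_\pm(j)$ with $|j|\,p_\pm(j)\in\ell^1$, i.e.\ $\mathbf{IV_1}$, $\mathbf{(i)}$ --- so your conclusion stands but the justification should be rephrased.
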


\section{On the continuity of the reflection coefficients}
\label{Contin}
The continuity of the reflection coefficient at the edge of the continuous spectrum in the resonance case has a long history. This question arose around 1975 in an attempt to clarify some characteristic properties, namely the Marchenko condition, for the scattering data of the Schr\"odinger equation on the
whole axis with a fast decaying potential of finite first moment. For higher moments $q$ starting from $q=2$, the answer is evident. For such $q$, the Jost solutions and their conjugates are differentiable
from the side of the spectrum with respect to the local parameter $\sqrt{\la - E}$, where $E$ ($E=0$ for the decaying potential) is the edge of the continuous spectrum. Therefore, their Wronskians are also differentiable with respect to the local parameter, and to prove continuity of the reflection coefficient in the resonance case it is sufficient to use de l'Hopital. This situation complicates a lot for finite first moments,
because the Jost solutions are no longer differentiable at $E$. The problem for first moments was solved independently by Gusseinov \cite{Gus2} and Klaus \cite{Kl} (for a revised version see \cite{AK}).

In this section, we combine methods developed in \cite{Gus2}, \cite{AK}
and adapt them to the discrete model to prove \eqref{Jestim12} and the continuity of the reflection coefficient
first for the Jacobi operator $H:\lz\to\lz$ with real coefficients $a(n)>0$, $b(n)$ satisfying
\beq\label{first}
\sum_{n\in\Z}|n|\Big(\Big|a(n)-\frac 1 2 \Big|+
|b(n)|\Big)<\infty.
\eeq
This result is of independent interest. Then we prove \eqref{Jestim12}
and ${\bf III_{const}}$, ${\bf (a)}$ for arbitrary constant backgrounds
which completes the proof of Theorem \ref{stepconst}.

The Jacobi spectral equation with coefficients satisfying \eqref{first},
\begin{equation}\label{main}
 a(n) y(z,n+1) + a(n-1) y(z,n-1) + b(n) y(z,n)=
 \frac{1}{2}(z+z^{-1})y(z,n),\quad |z|\leq 1,
\end{equation}
has the following two Jost solutions,
\beq\label{Jost}
\vp_\pm(z,j)=\sum^{\pm\infty}_{\ell=j} K_\pm(j,\ell)z^{\pm \ell}.
\eeq
Recall that the kernels $K_\pm(j,\ell)$ of the transformation operators satisfy
\beq\label{trans}
|K_\pm(j,\ell)|\leq
C\sum^{\pm\infty}_{n=\left[\frac{j+\ell}{2}\right]}\Big(\Big|a(n)-\frac 1 2 \Big|+
|b(n)|\Big)
\eeq
in this case. The functions $C_\pm(\cdot)$ in \eqref{chto} which decay in one direction and grow in
the other direction are replaced by a constant $C$ here
(and in the general case of coinciding finite-gap backgrounds $H^+=H^-$ as well). It follows from
\eqref{first}, \eqref{trans} that $\vp_\pm(z)$ are continuous for $z\in\mathbb{T}:=\{z:|z|=1\}$.
In general, they are not differentiable with respect to $z$ at $\hat z=+1,-1$, since
$\sum^{\pm\infty}_{\ell=j} |\ell K_\pm(j,\ell)|$ diverge.
Let
$$
W(z)=W(\vp_-,\vp_+)=a(0)(\vp_-(z,0)\vp_+(z,1) -
\vp_-(z,1)\vp_+(z,0))
$$
be the Wronskian of the Jost solutions and define
$$
W_1(z)=a(0)(\vp_-(z,0)\overline{\vp_+(z,1)} -
\vp_-(z,1)\overline{\vp_+(z,0)}).
$$

\begin{lemma}\label{lem11}
For $\hat z\in\{1,-1\}$, suppose that $W(\hat z)=0$. Then the following representations are valid
\begin{align}\label{est1}
W(z)&=C(z-\hat z)(1+o(1)) \quad \mbox{as $z\to\hat z$}, \\ \label{est2}
W_1(z)&=C_1(z-\hat z)(1+o(1)) \quad \mbox{as $z\to\hat z$ and $z\in\mathbb T$},
\end{align}
where $C\neq 0$ and $C_1$ are constants.
\end{lemma}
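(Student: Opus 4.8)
The plan is to reduce both claims to showing that $W(z)/(z-\hat z)$ and $W_1(z)/(z-\hat z)$ have finite limits as $z\to\hat z$, nonzero in the first case, and to compute these limits from an identity that exhibits the factor $z-\hat z$ explicitly. From \eqref{trans} and \eqref{first} the Jost solutions $\vp_\pm(z,\cdot)$ are holomorphic in $\{|z|<1\}$ and continuous up to $\mathbb T$, so $W(z)$ is holomorphic in the disc and continuous up to the boundary, while $W_1(z)$ is continuous on $\mathbb T$. The hypothesis $W(\hat z)=0$ means the two Jost solutions at $\hat z$ are proportional, $\vp_-(\hat z,\cdot)=\ga\,\vp_+(\hat z,\cdot)$ with $\ga\neq0$; I write $\phi_0:=\vp_+(\hat z,\cdot)$ for this bounded half-bound state, and note that $W_1(\hat z)=W(\hat z)=0$ since $\hat z$ is real and the kernels $K_\pm$ are real.

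The algebraic step uses the pointwise three-term (Pl\"ucker) identity $u(n)W_n(v,w)-v(n)W_n(u,w)+w(n)W_n(u,v)=0$, valid for arbitrary sequences, where $W_n(f,g)=a(n)(f(n)g(n+1)-f(n+1)g(n))$, together with the discrete Lagrange identity $W_n(f,g)-W_m(f,g)=(\nu-\mu)\sum_{j=m+1}^{n}f(j)g(j)$ whenever $Hf=\mu f$, $Hg=\nu g$, and the elementary computation $\tfrac12(z+z^{-1})-\hat z=\tfrac{(z-\hat z)^2}{2z}$ (with $\hat z=\pm1$ the corresponding edge $E$ of $\si(H_0)$). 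Taking $u=\vp_-(z,\cdot)$, $v=\phi_0$, $w=\vp_+(z,\cdot)$ at $n=0$ and evaluating the mixed Wronskians $W_0(\phi_0,\vp_+(z,\cdot))$, $W_0(\vp_-(z,\cdot),\phi_0)$ by letting $n\to+\infty$, resp.\ $n\to-\infty$ (where $\vp_\pm(z,\cdot)$ decays for $|z|<1$, so the boundary terms drop), one obtains for $|z|<1$
\beq
\phi_0(0)\,W(z)=\frac{(z-\hat z)^2}{2z}\Big(\vp_-(z,0)\,S_+(z)-\vp_+(z,0)\,S_-(z)\Big),
\eeq
where $S_+(z)=\sum_{m\geq1}\phi_0(m)\vp_+(z,m)$ and $S_-(z)=\sum_{m\leq0}\phi_0(m)\vp_-(z,m)$ (replace $0$ by $\pm1$ if $\phi_0(0)=0$). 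Since $\vp_+(z,0)\to\phi_0(0)$ and $\vp_-(z,0)\to\ga\,\phi_0(0)$, this reduces \eqref{est1} to showing that $(z-\hat z)\,S_\pm(z)$ converge as $z\to\hat z$. The analogue for $W_1$ on $\mathbb T$, using $\overline{\vp_+(z,n)}=\vp_+(z^{-1},n)$ (a solution of $H\cdot=\tfrac12(z+z^{-1})\cdot$ for $|z|=1$), is the same, but there it must be run with the Lagrange identity over finite windows $[-M,M]$, $M\sim|z-\hat z|^{-1}$, whose boundary terms are $O(|z-\hat z|)$ and have to be tracked.

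The analytic heart, and the main obstacle, is the convergence of $(z-\hat z)\,S_\pm(z)$. Substituting $\vp_+(z,m)=\sum_{\ell\geq m}K_+(m,\ell)z^\ell$ and the corresponding expansion of $\phi_0$, interchanging the summations, and isolating the diagonal contribution $\sum_{m\geq1}(\hat z z)^mK_+(m,m)^2$ — which, since $K_+(m,m)\to1$, already produces a pole $\sim\tfrac{-\hat z}{z-\hat z}$ — one is left with correction sums whose coefficients are $o(1)$ but, since only the first moment \eqref{first} is available, fail to be summable. These are estimated by Abel summation against the Dirichlet-type kernels $\sum_{\ell\le N}z^\ell$, whose partial sums are $O(\min(N,|z-\hat z|^{-1}))$, splitting the range of $\ell$ at $\ell\sim|z-\hat z|^{-1}$ and using that $\Om(m):=\sum_{n\geq m}(|a(n)-\tfrac12|+|b(n)|)$ is summable while $m\,\Om(m)\to0$; this is the discrete counterpart of the analysis in \cite{Gus2}, \cite{AK}. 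For a second or higher moment $\vp_\pm(z,\cdot)$ would be differentiable in $z$ up to $\hat z$, the limits would be derivatives, and \eqref{est1}, \eqref{est2} would reduce to l'H\^opital; with one moment one must carry out this bookkeeping uniformly in $z$ near $\hat z$, including $z\in\mathbb T$, to obtain genuine limits rather than only crude bounds such as $S_\pm(z)=o((z-\hat z)^{-2})$.

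Assembling the identity gives $W(z)=C(z-\hat z)(1+o(1))$ with $C=\tfrac{1}{2\hat z}(\ga L_+-L_-)$, $L_\pm:=\lim_{z\to\hat z}(z-\hat z)S_\pm(z)$, and likewise $W_1(z)=C_1(z-\hat z)(1+o(1))$ on $\mathbb T$. Finally $C\neq0$: on $\mathbb T$ near $\hat z$ the unitarity relation ${\bf I}$, ${\bf (c)}$ (here $g_+=g_-$, so $|R_+|^2+|T_+|^2=1$) together with $W=\tfrac{1}{T_+g_+}$ and $|g_+(z)|=\tfrac{2}{|z-z^{-1}|}=\tfrac{1}{|z-\hat z|}(1+o(1))$ forces $|W(z)|\geq|z-\hat z|(1+o(1))$ — equivalently this is property ${\bf II}$, ${\bf (b)}$ of Lemma~\ref{Jlem2.3} with $\hat W=W$ — which rules out $C=0$; in fact $|C|\geq1$. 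This completes the proof.
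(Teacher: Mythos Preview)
Your Pl\"ucker--Lagrange reduction is correct (up to a sign: the bracket should read $\vp_-(z,0)S_+(z)+\vp_+(z,0)S_-(z)$ with an overall minus in front), and it is a different route from the paper's. But there is a genuine gap at exactly the place you label ``the analytic heart'': you never establish that $(z-\hat z)S_\pm(z)$ has a \emph{limit}. Once the geometric-series pole is stripped off, the remaining correction sums are not absolutely convergent under \eqref{first}---for instance $\sum_{m\ge1}\sum_{\ell>m}|K_+(m,\ell)|<\infty$ would require $\sum_k k\,\Om(k)<\infty$, a second moment---so continuity cannot be read off. The Abel/Dirichlet splitting you describe, together with $m\,\Om(m)\to0$, yields at best the ``crude bound'' $S_\pm(z)=o\big((z-\hat z)^{-2}\big)$ that you yourself dismiss; nothing in the sketch upgrades this to $(z-\hat z)S_\pm(z)=L_\pm+o(1)$. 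For $W_1$ the situation is worse: on $\mathbb T$ the Lagrange sums diverge, the truncated boundary terms at $M\sim|z-\hat z|^{-1}$ oscillate like $(\hat z\bar z)^M$, and extracting a limit (rather than an $O(|z-\hat z|)$ bound) needs a further idea you do not supply. Note incidentally that your quantity and the paper's are the same object: since $W_0(\phi_0,\vp_+(z))=a(0)\big(\vp_+(\hat z,0)\vp_+(z,1)-\vp_+(\hat z,1)\vp_+(z,0)\big)=:a(0)\breve W_+(z)$, one has $(z-\hat z)S_+(z)=-2a(0)\Psi_+(z)$ in the paper's notation below, so what you are missing is precisely the continuity of $\Psi_+$ at $\hat z$.

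The paper obtains this continuity by a device absent from your sketch. A single Abel summation on \eqref{Jost} gives $\breve W_\pm(z)=\tfrac{z-\hat z}{z}\,\Psi_\pm(z)$ with $\Psi_\pm(z)=\sum_\ell h_\pm(\ell)(\hat z z)^{\pm\ell}$, where $h_\pm(\ell)=\Phi_\pm^{(1)}(\ell)\vp_\pm(\hat z,0)-\Phi_\pm^{(0)}(\ell)\vp_\pm(\hat z,1)$ and $\Phi_\pm^{(j)}(s)=\sum_{\ell=s}^{\pm\infty}K_\pm(j,\ell)\hat z^\ell$; the whole problem becomes $h_\pm\in\ell^1(\Z_\pm)$. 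This is where the \emph{Marchenko equation} enters: summing the Marchenko equations for $K_+(0,\cdot)$ and $K_+(1,\cdot)$ against $\hat z^m$ from $m=s$ to $\infty$, Abel-summing once more, and taking the right linear combination produces a Volterra-type equation
\[
h_+(s)-\sum_{\ell\ge1}h_+(\ell)\,v(\ell+s-1)=r(s),\qquad v(\ell)=F_+(\ell)\hat z^\ell,\quad |v(\ell)|\le C\,\Om\big([\ell/2]\big),\quad r\in\ell^1.
\]
A standard lemma (\cite{mar}, (3.2.24)--(3.2.25)) then says that any \emph{bounded} solution of such an equation already lies in $\ell^1$. This is the missing mechanism: the Marchenko equation converts ``bounded'' into ``summable'', i.e.\ into continuity of $\Psi_\pm$, which is exactly the step your raw oscillatory estimate cannot deliver. (The references \cite{Gus2}, \cite{AK} you invoke also route through the Marchenko equation, not through the direct Dirichlet-kernel bookkeeping you outline.) With $h_\pm\in\ell^1$ the ratios $\vp_\pm(z,1)/\vp_\pm(z,0)$ acquire honest first-order expansions at $\hat z$ valid throughout $\{|z|\le1\}$, and \eqref{est1}--\eqref{est2} follow at once; your $C\neq0$ argument via $|T_+|\le1$ is then fine.
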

\begin{proof}
We will use summation by parts, i.e., the following identity,
\beq\label{int}
\sum^{\pm\infty}_{\ell=s} \big(f(\ell) - f(\ell\pm 1)\big)v(\ell) = \sum^{\pm\infty}_{\ell=s} f(\ell)
\big(v(\ell) - v(\ell\mp 1)\big) + f(s)v(s\mp 1),
\eeq
which is valid for all $f(\cdot)\in\liz$, $\sup_{\ell\in\mathbb{Z}_\pm}|v(\ell)|<\infty$ or vice versa.

Introduce
\beq\label{defPh}
\Phi_\pm^{(j)}(s)=\sum^{\pm\infty}_{\ell=s}
K_\pm(j,\ell)\hat z^{\ell}
\eeq
which are well defined due to \eqref{trans} and \eqref{first}.
Moreover, since $\hat z^{-1}=\hat z$,
\beq\label{main2}
\Phi_\pm^{(j)}(j)=\vp_\pm(\hat z,j).
\eeq
Applying \eqref{int} to \eqref{Jost} we obtain
\begin{align*}
&\vp_\pm(z,j)=\sum_{\ell=j}^{\pm\infty}\Phi_\pm^{(j)}(\ell)\big((\hat z z)^{\pm \ell}
- (\hat z z)^{\pm \ell-1}\big)+\Phi_\pm^{(j)}(j )(\hat z z)^{\pm j-1}\\
&=\vp_\pm(\hat z, j) +\Phi_\pm^{(j)}(j)\left((\hat z z)^{\pm j-1}- \hat z^{\pm
2j-2}\right)+\sum_{\ell=j}^{\pm\infty}\Phi_\pm^{(j)}(\ell)(\hat z z)^{\pm
l}\left(1-(\hat z z)^{-1}\right).
\end{align*}
Abbreviate
$$
\zeta(z):=\frac{z-\hat z}{z},
$$
then
\begin{align}\label{phi0}
\vp_\pm(z,0)&=\zeta(z) \sum_{\ell=\frac{1\pm 1}{2}}^{\pm\infty}\Phi_\pm^{(0)}(\ell)(\hat z z)^{\pm \ell}
+(z\hat z)^{\frac{\pm 1 - 1}{2}}\vp_\pm(\hat z, 0), \\ \label{phi1}
\vp_\pm(z,1)&=\zeta(z) \sum_{\ell=\frac{1\pm 1}{2}}^{\pm\infty}\Phi_\pm^{(1)}(\ell)(\hat z z)^{\pm \ell}
+(z\hat z)^{\frac{\pm 1 - 1}{2}}\vp_\pm(\hat z, 1).
\end{align}
Multiplying \eqref{phi0} by $\vp_\pm(\hat z,1)$ and \eqref{phi1} by
$\vp_\pm(\hat z,0)$, their difference is equal to
\beq\label{hatW}
\breve W_\pm(z):=\vp_\pm( z,1)\vp_\pm(\hat z,0)- \vp_\pm( z,0)\vp_\pm(\hat
z,1)=\zeta(z)\Psi_\pm(z),
\eeq
where
\begin{align} \nn
\Psi_\pm(z)&:=\sum_{\ell=\frac{1\pm 1}{2}}^{\pm\infty}h_\pm(\ell)(\hat z z)^{\pm \ell},\\ \label{defh}
h_\pm(\ell)&:=\Phi_\pm^{(1)}(\ell)\vp_\pm(\hat z,0) -\Phi_\pm^{(0)}(\ell)\vp_\pm(\hat z,1).
\end{align}
Note that by \eqref{defPh}, \eqref{trans} we have $h_\pm(\cdot) \in \ell^\infty(\mathbb{Z}_\pm)$.
Suppose that we can show
\beq\label{el1}
h_\pm(\cdot)\in\liz.
\eeq
Then $\Psi_\pm(z)$ are continuous at $\hat z$ and \eqref{hatW} implies
\begin{align}\label{deriv}
\frac{\vp_\pm(z,1)}{\vp_\pm(z,0)}&=\frac{\vp_\pm(\hat
z,1)}{\vp_\pm(\hat z,0)} +C_0^\pm(z-\hat z)(1 + o(1)),\quad \mbox{as $\vp_\pm(\hat z,0)\neq 0$}, \\ \label{deriv1}
\frac{\vp_\pm(z,0)}{\vp_\pm(z,1)}&=\frac{\vp_\pm(\hat
z,0)}{\vp_\pm(\hat z,1)} +C_1^\pm(z-\hat z)(1 + o(1)),\quad \mbox{as $\vp_\pm(\hat z,1)\neq 0$},
\end{align}
for constants $C_1^\pm$, $C_0^\pm$.
Both representations hold for all $|z|\leq 1$ in a vicinity of $\hat z$.

Hence, if $W(\hat z)=0$, then $\vp_-(\hat z,j)=\gamma\vp_+(\hat z,j)$ and we
have two possibilities, either
\beq\label{a}
\vp_-(\hat z,0)\vp_+(\hat z,0)\neq 0,
\eeq
or
\beq\label{b}
\vp_-(\hat z,0)=\vp_+(\hat z,0)=0.
\eeq
In the first case we obtain from \eqref{deriv}
\begin{align}\label{wrm} \nn
W(z)&=a(0)\vp_-(z,0)\vp_+(z,0)\Big(\frac{\vp_+(\hat z,1)}{\vp_+(\hat z,0)}
- \frac{\vp_-(\hat z,1)}{\vp_-(\hat z,0)}+(C_0^+-C_0^-)(z-\hat z)(1+o(1))\Big)\\
&=a(0)\vp_-(\hat z,0) \vp_+(\hat z,0)(C_0^+ -C_0^-)(z-\hat z)(1+o(1)),
\end{align}
which implies \eqref{est1} with $C=a(0)\vp_-(\hat z,0) \vp_+(\hat z,0)(C_0^+
-C_0^-)$. It follows from the general estimate \eqref{Jestim} that
$(z^2-1)/W(z)$ is bounded for $z\in\mathbb T$ in small vicinities of $+1,-1$, therefore
$C\neq 0$.
For the case \eqref{b} we use \eqref{deriv1}.
To obtain \eqref{est2}, we substitute $z^{-1}$ for $z$ (which is possible as $z\in\mathbb T$)
in the "$+$" cases of \eqref{Jost}, \eqref{main2}--\eqref{deriv1} and also use $\overline{\hat z}=\hat z$.

Therefore, in order to finish the proof it remains to show \eqref{el1}. Let us first consider
$h_+$. For $m\geq 1$, the transformation operators involved in \eqref{el1} satisfy
the following Marchenko equations,
\begin{align*}
&K_+(0,m)+ \sum_{\ell=0}^{+\infty} K_+(0,\ell)F_+(\ell+m)=0, \\
&K_+(1,m) +\sum_{\ell=1}^{+\infty} K_+(1,\ell)F_+(\ell+m)=\frac{\delta(1,m)}{K_+(1,1)}.
\end{align*}
Multiplying both equalities by $\hat z^m$ and summing from $m=s\geq 1$ to $+\infty$
gives
\begin{align*}
&\Phi_+^{(0)}(s)+\sum_{m=s}^{+\infty}\sum_{\ell=0}^{+\infty}F_+(\ell+m)\hat
z^{\ell+m} \left(\Phi_+^{(0)} (\ell)-\Phi_+^{(0)}(\ell+1)\right)=0, \\
&\Phi_+^{(1)}(s)+\sum_{m=s}^{+\infty}\sum_{\ell=1}^{+\infty}F_+(\ell+m)\hat
z^{\ell+m} \left(\Phi_+^{(1)}(\ell)-\Phi_+^{(1)}(\ell+1)\right)=\frac{\delta(1,s)}{K_+(1,1)}.
\end{align*}
We set $v(\ell)=F_+(\ell)\hat z^\ell$ and sum by parts according to \eqref{int},
\begin{align*}
&\Phi_+^{(0)}(s)+\sum_{m=s}^{+\infty}\bigg(\sum_{\ell=0}^{+\infty}
\left(v(\ell+m)- v(\ell+m-1)\right)\Phi_+^{(0)} (\ell) +\Phi_+^{(0)} (0)v(m-1)\bigg)=0, \\
&\Phi_+^{(1)}(s)+\sum_{m=s}^{+\infty}\bigg(\sum_{\ell=1}^{+\infty} \left(v(\ell+m)-
v(\ell+m-1)\right)\Phi_+^{(1)} (\ell) + \Phi_+^{(1)}(1)v(m)\bigg)=\frac{\delta(1,s)}{K_+(1,1)}.
\end{align*}
Summing over $m$ and taking \eqref{main2} into account yields
\begin{align}\label{4}
\Phi_+^{(0)} (s) + \vp_+(\hat z,0)\sum_{m=s}^{+\infty} v(m)
-\sum_{\ell=1}^{+\infty}\Phi_+^{(0)} (\ell)v(\ell+s-1)&=0, \\ \nn
\Phi_+^{(1)} (s) + \vp_+(\hat z,1)\sum_{m=s}^{+\infty} v(m)
-\sum_{\ell=1}^{+\infty}\Phi_+^{(1)} (\ell)v(\ell+s-1)&=\frac{\delta(1,s)}{K_+(1,1)}.
\end{align}
We again have to distinguish two cases. If \eqref{a} holds, we multiply the first equation by
$\vp_+(\hat z,1)$, the second by $\vp_+(\hat z,0)$, subtract the
first from the second, and use \eqref{defh} to arrive at
\beq\label{main3}
h_+(s) -\sum_{\ell=1}^{+\infty}
h_+(\ell)v(\ell+s-1)=\frac{\delta(1,s)}{K_+(1,1)}\vp_+(\hat z,0).
\eeq
We know that $h_+(\cdot)\in \ell^\infty(\mathbb{Z}_+)$, and from the estimates given in
\cite{gu} and \cite[Ch.\ 10]{tjac} we have (recall that $|v(\ell)|= |F_+(\ell)|$)
 \beq\label{vest}
|v(\ell)|\leq C\sum^{+\infty}_{n=\left[\frac{\ell}{2}\right]}\Big(\Big|a(n)-\frac 1 2 \Big|+
|b(n)|\Big).
\eeq
But any bounded solution of \eqref{main3} with a kernel satisfying \eqref{vest} already is in
$\ell^1(\mathbb Z_+)$, as proved in \cite{mar} (see (3.2.24)--(3.2.25)).
For the case \eqref{b} we have
$$
\vp_+(\hat z,0)=0, \quad h_+(s)=\vp_+(\hat z,1)\Phi_+^{(0)}(s).
$$
Therefore, \eqref{4} implies
$$
h_+(s) -\sum_{\ell=1}^{+\infty} h_+(\ell)v(\ell+s-1)=0,
$$
and hence again $h_+(\cdot)\in \ell^1(\mathbb{Z}_+)$.
\end{proof}
The continuity of the reflection coefficients $R_\pm$ follows from Lemma~\ref{lem11},
since $R_+(z)=-W_1(z)W^{-1}(z)$, $R_-(z)=\overline{W_1(z)}W^{-1}(z)$.

Now we are ready  to prove \eqref{Jestim12} for the case of two different constant background operators  $H^\pm_{const}$, defined by \eqref{Jconstb}, and $q=1$. The spectra $\si_\pm$ of $H^\pm_{const}$
consist of intervals,
\beq\label{gran}
\si_\pm=[c^\pm, d^\pm], \quad \mbox{where $c^\pm:=b^\pm-2a^\pm$, $d^\pm:=b^\pm+2a^\pm$}.
\eeq
Let us first specify the set $\Sigma_v$ of possible virtual levels for this case. In order to
complete the proof of property ${\bf III_{const}}$, ${\bf (a)}$, compared to
${\bf III}$, ${\bf (a)}$, we have to show continuity of one or the other reflection coefficient
on $\Sigma_v$. The following mutual locations of the spectra $\si_\pm$ are possible.
\begin{enumerate}[(i)]
\item $\si_-\subset\si_+$ (or vice-versa). Then $\Sigma_v=\pa\si_+$, and we have to show continuity of $R_+$;
\item $d^+\leq c^-$ (or $d^-\leq c^+$). Then $\Sigma_v=\{c^+,d^+, c^-, d^-\}$, and we have to show
continuity of $R_\pm$ at $c^\pm$ and $ d^\pm$;
\item $\si^{(2)}\neq\emptyset$, for example,
$\si^{(2)}=[c^-, d^+]$. Then $\Sigma_v=\{c^+, d^-\}$, and we have to show continuity of $R_+$ at $c^+$ and of $R_-$ at $d^-$.
\end{enumerate}
We start with case (i), $\Sigma_v=\{c^+, d^+\}$. Suppose that $E=d^+$, $W(E)=0$, and introduce the change of variables $\la-b^+=a^+(z+z^{-1})$ (cf.\ \eqref{chanv}). The point $d^+$ is mapped to
$\hat z=1$ and the Jost solution $\phi_+(\la)=\varphi_+(z)$ is now considered as a function of $z$
on the unit circle with representation \eqref{Jost}. Repeating literally the beginning of the proof of Lemma \ref{lem11} we obtain \eqref{hatW} and \eqref{defh} for $\varphi_+(z)$. Then
\eqref{deriv} and \eqref{deriv1} follow from \eqref{el1}. Observe that the only structural information on
the kernel of the Marchenko equation which was used to prove \eqref{el1} is inequality \eqref{vest}. But
\eqref{vest} holds in the steplike case too, since we use it for $\ell\geq 0$ where $C_1(n)$
(see \eqref{Jfnm}, \eqref{chto}) can be replaced by a constant.

Hence \eqref{deriv} and \eqref{deriv1} are valid in the steplike case, but only for the Jost solution for
which the point $\hat z$ is an edge of the spectrum of its background operator. In turn, \eqref{deriv} and \eqref{deriv1} imply for all $\la$ in a vicinity of $E=d^+$,
 \begin{align}\label{derivp}
\frac{\phi_+(\la,1)}{\phi_+(\la,0)}
&=\frac{\phi_+(E,1)}{\phi_+(E,0)} +C_0^+\sqrt{\la - E}(1 + o(1)), \quad \mbox{as
$\phi_+(E,0)\neq 0$}, \\ \label{derivp1}
\frac{\phi_+(\la,0)}{\phi_+(\la,1)}&=
\frac{\phi_+(E,0)}{\phi_+(E,1)} +C_1^+ \sqrt{\la - E}(1 + o(1)), \quad \mbox{as
$\phi_+(E,1)\neq 0$},
\end{align}
where $C_1^+$, $C_0^+$ are constants. Evidently, the same expressions will hold for $\frac{\overline{\phi_+(\la,1)}}
{\overline{\phi_+(\la,0)}}$ (resp.\ $\frac{\overline{\phi_+(\la,0)}}
{\overline{\phi_+(\la,1)}}$) as $\la\to E$ and $\la<E$.

On the other hand, if $d^-<d^+$ then $\phi_-(\la,\cdot)$ are $C^\infty$-functions of $\la$,
and we have
\begin{align}\label{derivm}
\frac{\phi_-(\la,1)}{\phi_-(\la,0)}
&=\frac{\phi_-(E,1)}{\phi_-(E,0)} +\hat C_0^-(\la - E)(1 + o(1)), \quad \mbox{as
$\phi_-(E,0)\neq 0$}, \\ \label{derivm1}
\frac{\phi_-(\la,0)}{\phi_-(\la,1)}&=
\frac{\phi_-(E,0)}{\phi_-(E,1)} +\hat C_1^- (\la - E)(1 + o(1)), \quad \mbox{as
$\phi_-(E,1)\neq 0$},
\end{align}
for constants $\hat C_1^-$, $\hat C_0^-$.
If $d^+=d^-$, then upon the change of variables $\la-b^-=a^-(z+z^{-1})$,
where $d^-=d^+=E$ corresponds to $\hat z =1$, we obtain \eqref{deriv}, \eqref{deriv1}
for $\vp_-(z,\cdot)$. Therefore,
\begin{align}\label{derivmm}
\frac{\phi_-(\la,1)}{\phi_-(\la,0)}
&=\frac{\phi_-(E,1)}{\phi_-(E,0)} + C_0^-\sqrt{\la - E}(1 + o(1)), \quad \mbox{as
$\phi_-(E,0)\neq 0$}, \\ \label{derivmm1}
\frac{\phi_-(\la,0)}{\phi_-(\la,1)}&=
\frac{\phi_-(E,0)}{\phi_-(E,1)} + C_1^-\sqrt{\la - E}(1 + o(1)), \quad \mbox{as
$\phi_-(E,1)\neq 0$},
\end{align}
for all $\la$ in a small vicinity of $E$.
The same considerations as for \eqref{wrm} imply now for the case \eqref{a} that
$W(\la)=C\sqrt{\la - E}(1+o(1))$ with $C=-a(0)\phi_-(\la,0)\phi_+(\la,0)C_0^+$ if $d^-<d^+$, and $C=a(0)\phi_-(\la,0)\phi_+(\la,0)(C_0^- - C_0^+)$
if $d^-=d^+$. Due to \eqref{Jestim}, $C\neq 0$, which proves \eqref{Jestim12} in this case.
It is clear that for
$$
W_1(\la)=a(0)(\phi_-(\la,0)\overline{\phi_+(\la,1)} -\phi_-(\la,1)\overline{\phi_+(\la,0)}
$$
the representation $W_1(\la)=C_1\sqrt{\la - E}(1+o(1))$ from the side of the spectrum will hold,
but here we cannot check whether $C_1$ vanishes or not. This representation and \eqref{Jestim12}
imply the continuity of the reflection coefficient $R_+$ for case (i).
Cases (ii) and (iii) can be treated in the same way, which finishes the proof
of Theorem~\ref{stepconst}.

\bigskip
\noindent{\bf Acknowledgments.}
We thank A.\ K.\ Khanmamedov for bringing the mistake in \cite{emtqps} to our attention.
Furthermore, we are indebted to the anonymous referee for valuable suggestions improving
the presentation of the material.

\end{document}